\numberwithin{equation}{section}
\newtheorem{theorem}{Theorem}[section]
\newtheorem{lemma}{Lemma}[section]
\newtheorem{remark}{Remark}[section]
\newcommand{\R}{\mathbb{R}}
\newcommand{\Z}{\mathbb{Z}}
\newcommand{\C}{\mathbb{C}}
\newcommand{\N}{\mathcal{N}}
\newcommand{\V}{\mathcal{V}}
\journal{}
\begin{document}
\UseRawInputEncoding
\allowdisplaybreaks[4]
\begin{frontmatter}

\title{Positive ground state solutions for generalized quasilinear Schr\"odinger equations with critical growth
\tnoteref{ack}}
\tnotetext[ack]{This work is partially supported by NSFC Grants (12225103, 12071065 and 11871140), the National Key Research and Development Program of China (2020YFA0713602 and 2020YFC1808301).}



\author[ad1]{Xin Meng}
\ead{mengxin22@mails.jlu.edu.cn}
\author[ad2,ad1]{Shuguan Ji\corref{cor}}
\ead{jishuguan@hotmail.com}
\address[ad1]{School of Mathematics, Jilin University, Changchun 130012, P.R. China}
\address[ad2]{School of Mathematics and Statistics and Center for Mathematics and Interdisciplinary Sciences, Northeast Normal University, Changchun 130024, P.R. China}
\cortext[cor]{Corresponding author.}

\begin{abstract}

This paper concerns the existence of positive ground state solutions for generalized quasilinear Schr\"odinger equations in $\R^{N}$ with critical growth which arise from plasma physics, as well as high-power ultrashort laser in matter. By applying a variable replacement, the quasilinear problem reduces to a semilinear problem which the associated functional is well defined in the Sobolev space $H^1(\R^N)$. We use the method of Nehari manifold for the modified equation, establish the minimax
characterization, then obtain each Palais-Smale sequence of the associated energy functional is bounded. By combining Lions's concentration-compactness lemma together with some classical arguments developed by Br\'ezis and Nirenberg \cite{bn}, we establish that the bounded Palais-Smale sequence has a nonvanishing behaviour. Finally, we obtain the existence of a positive ground state solution under some appropriate assumptions. Our results extend and generalize some known results.

\end{abstract}

\begin{keyword}
Ground state solutions, quasilinear Schr\"odinger equations, critical growth.
\end{keyword}

\end{frontmatter}


\section{Introduction and main results}

In this paper, we are concerned with the existence of positive ground state solutions for the following generalized quasilinear Schr\"odinger equations with critical growth in the entire space
\begin{equation}\label{eq1}
-\mbox{div}\left(g^2(u)\nabla u\right)+g(u)g'(u)|\nabla u|^2+V(x)u=f(x,u)+g(u)|G(u)|^{2^{*}-2}G(u)
\end{equation}
for $x=(x_1, x_2, \cdots, x_N)\in\R^{N}$, where $N\geq3$, $2^{*}=\frac{2N}{N-2}$ is the Sobolev critical exponent, $G(t)=\int_{0}^{t}g(\tau)d\tau$, $V(x)\in C(\R^{N},\R)$ and $f(x,u)\in C(\R^{N}\times\R,\R)$ are periodic in $x$ with same period for each component $x_i$, and $g$ satisfies that\\
($g$) $g\in C^1(\R,\R^+)$ is an even function with $g'(t)\geq0$ for all $t\geq0$ and $g(0)=1$.

As far as we know, the problem is related to the existence of solitary wave solutions for quasilinear Schr\"odinger equations of the form

\begin{equation}\label{eq1.2}
i\partial_tz=-\triangle z+W(x)z-k(x,z)-\triangle l(|z|^2)l'(|z|^2)z,\quad x\in\R^N,
\end{equation}
where $z: \R\times\R^N\rightarrow\C$, $W:\R^N\rightarrow\R$ is a given potential, $k$ and $l$ are suitable real functions. Quasilinear equations of form \eqref{eq1.2} appear more naturally in mathematical physics and have been derived as models of several physical phenomena for various types of $l$. In particular, the case $l(s)=s$ models the time evolution of the condensate wave function in superfluid film (see \cite{SK} and \cite{EWL}), and such a model is called the superfluid film equation in plasma physics by Kurihara in \cite{SK}. For the case $l(s)=(1+s)^{1/2}$, equation \eqref{eq1.2} models the self-channeling of a high-power ultrashort laser in matter, the propagation of a high-irradiance laser in a plasma creates an optical index depending nonlinearly on the light intensity, and this leads to interesting new nonlinear wave equations (see \cite{HSB,XLC,ADB,BR}). We see that \eqref{eq1.2} also appears in the theory of Heisenberg ferromagnetism and magnons \cite{AMK}, in dissipative quantum mechanics \cite{RWH} and in condensed matter theory \cite{VGM}. We can refer the readers to see \cite{S1} and \cite{S2}, and references therein for more details.

By seeking solutions of the type of standing waves, namely, the solutions of the form $z(t,x)=\exp(-iBt)u(x)$ with a constant $B\in\R$ and a real function $u$, \eqref{eq1.2} can be reduced to the following equation
\begin{equation}\label{eq1.3}
-\triangle u+V(x)u-\triangle l(u^2)l'(u^2)u=k(x,u),\quad x\in\R^N,
\end{equation}
where $V(x)=W(x)-B$. Notice that if we take
\begin{equation*}
g^2(u)=1+\frac{[(l(u^2))']^2}{2},
\end{equation*}
then \eqref{eq1.3} turns into \eqref{eq1} with $k(x,u)=f(x,u)+g(u)|G(u)|^{2^{*}-2}G(u)$.

Moreover, if we assume that $g(u)$ is a positive constant, then \eqref{eq1} arises in biological models and propagation of laser beams \cite{PLK}. If we set $g^2(u)=1+2u^2$, i.e., $l(s)=s$, we get the superfluid film equation in plasma physics with the form
\begin{equation}\label{eq1.4}
-\triangle u+V(x)u-\triangle(u^2)u=k(x,u),\quad x\in\R^N.
\end{equation}
Recent mathematical studies have extensively focused on the existence of solutions for \eqref{eq1.4} in subcritical and critical cases. For subcritical cases, Liu and Wang in \cite{S1} establish the existence of ground states of soliton type solutions through a constrained minimization argument, the method is also used in \cite{CXJJGA, TANGJGA} to obtain ground state solutions. In \cite{S2}, they use a change of variables for the first time to reformulate the quasilinear problem, then obtain a semilinear problem which has an associated functional well-defined in the Sobolev space $H^1(\R^N)$. Meanwhile, an Orlicz space framework is used to prove the existence of positive solutions for equation \eqref{eq1.4} with a parameter family of special nonlinear terms via mountain pass theorem (see \cite{AR}). The same method of changing variables in \cite{S2} is also used in \cite{FS}, they obtain the existence of infinitely many geometrically distinct solutions. For the critical case, Moameni in \cite{AM} considers the related perturbed problem with strongly singular nonlinearities and obtains a positive radial solution when the potential $V$ is radial and satisfies some geometric conditions. In \cite{LLW}, Liu et al. propose a new perturbation approach to treat the critical exponent case and give new existence results.

If we set $g^2(u)=1+\frac{u^2}{2(1+u^2)}$, i.e., $l(s)=(1+s)^{1/2}$, then \eqref{eq1} becomes the following equation
\begin{equation}\label{eq1.5}
-\triangle u+V(x)u-[\triangle(1+u^2)^{\frac{1}{2}}]\frac{u}{2(1+u^2)^{\frac{1}{2}}}=k(x,u),\quad x\in\R^N,
\end{equation}
which is used as a model of the self-channeling of a high-power ultrashort laser in matter. In \cite{ADB}, the authors prove the local existence in one dimensional transverse space, the global existence and uniqueness of small solutions in two and three dimensional transverse space for equation \eqref{eq1.5}. It is worth pointing out that \eqref{eq1} is more general than \eqref{eq1.4} and \eqref{eq1.5}.

Note that the results mentioned above on the existence of solitary wave solutions for Schr\"odinger equations \eqref{eq1.2} are for certain given special function $l(s)$. For some general functions $l(s)$, Shen and Wang in \cite{Shen} introduce a new variable replacement method which provides us a unified method in studying problem \eqref{eq1.2}. They establish the existence of positive solutions for \eqref{eq1.2} with subcritical nonlinear terms. Later, the results are extended by \cite{Furtado} for the existence and multiplicity of solutions, and by \cite{CCPAA} for the existence of ground state solutions. As for generalized quasilinear Schr\"odinger equations with critical growth, Deng et al. in \cite{Peng} establish the existence of positive solutions by using the mountain pass theorems. To the best of our knowledge, no one has studied the existence of ground state solutions for generalized quasilinear Schr\"odinger equations with critical growth. The main difficulty is owing to the possible lack of compactness for the criticality of the growth and the unboundedness of the definition domain.

Inspired by the previous work, the goal of this paper is to establish the existence of positive ground state solutions of the generalized quasilinear equations \eqref{eq1} with critical growth. We develop a more direct and simpler approach to obtain the ground state solutions which have great physical interests. First, we use a change of variable to reduce the problem \eqref{eq1} to a semilinear problem which the associated energy functional is well defined in $H^1(\R^N)$. Then we use the method of Nehari manifold developed by Szulkin and Weth in \cite{AS,monm} which gives rise to a new minimax characterization of the corresponding critical value to conquer it and obtain each Palais-Smale sequence of the associated energy functional is bounded. By combining Lions's concentration-compactness lemma together with some classical arguments developed by Br\'ezis and Nirenberg \cite{bn}, we establish that the bounded Palais-Smale sequence has a nonvanishing behaviour. Using this fact, we obtain a translated Palais-Smale sequence converges to a nontrivial critical point of the associated functional. Finally, we obtain the existence of a positive ground state solution of equation \eqref{eq1}.

Compared with the existing results, our results improve and generalize the existing related results. In the present paper, we concern the unified periodicity of the potential function $V$ and the nonlinear term $f$, and without lose of generality we assume that they are both 1-periodic in $x$. Besides, the nonlinear term $f$ in our paper need not be differentiable as mentioned in \cite{Peng}, then the constrained manifold need not be of class $C^1$ in our case. Furthermore, our assumptions on the nonlinear term $f$ are somewhat weaker than those in \cite{Peng} and \cite{Shen}, and we do not require the classical Ambrosetti-Rabinowitz superlinear condition.

Before introducing the main theorem, we will give some basic assumptions. We assume that the functions $g, V$ and $f$ satisfy the following assumptions:
\\($V$) $V(x)\in C(\R^N,\R)$ is 1-periodic in $x_i$, $1\leq i\leq N$, and $V(x)>0$ for all $x\in\R^N$;
\\($f_1$) $f(x,t)\in C(\R^N\times\R,\R)$ is 1-periodic in $x_i$, $1\leq i\leq N$;
\\($f_2$) $\lim\limits_{t\rightarrow+\infty}\frac{f(x,t)}{g(t)|G(t)|^{2^{*}-1}}=0$ and $\lim\limits_{t\rightarrow0}\frac{f(x,t)}{g(t)G(t)}=0$ uniformly in $x\in\R^N$;
\\($f_3$) $t\rightarrow\frac{f(x,t)}{g(t)G(t)}$ is non-decreasing on $(0,+\infty)$;
\\($f_4$) there exists an open bounded set $\Omega\subset\R^N$, such that

$\lim\limits_{t\rightarrow+\infty}\frac{F(x,t)}{G(t)^4}=+\infty$ uniformly for $x\in\Omega$, if $N=3$,

$\lim\limits_{t\rightarrow+\infty}\frac{F(x,t)}{G(t)^2\ln G(t)}=+\infty$ uniformly for $x\in\Omega$, if $N=4$,

$\lim\limits_{t\rightarrow+\infty}\frac{F(x,t)}{G(t)^2}=+\infty$ uniformly in $x\in\R^N$, if $N\geq5$, where $F(x,t)=\int_{0}^{t}f(x,\tau)d\tau$.

We note that if $u_0$ is a solution of \eqref{eq1}, then so is the element $u_0(\cdot-k)$ under the action of $\Z^N$, set $\mathcal{O}(u_0)=\{u_0(\cdot-k):k\in\Z^N\}$, $\mathcal{O}(u_0)$ is called the orbit of $u_0$ with respect to the action of $\Z^N$. Two solutions $u_1$ and $u_2$ of \eqref{eq1} are said to be geometrically distinct if $\mathcal{O}(u_1)$ and $\mathcal{O}(u_2)$ and disjoint. Now we state our main result.

\begin{theorem}\label{th1}
Suppose that $(g),(V)$ and $(f_1)-(f_4)$ hold. Then problem \eqref{eq1} has a positive ground state solution.
\end{theorem}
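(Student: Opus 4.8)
\textbf{Proof strategy for Theorem \ref{th1}.}

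The plan is to follow the change-of-variables philosophy of \cite{S2,Shen} combined with the Szulkin--Weth Nehari manifold technique. First I would introduce the function $G^{-1}$ (the inverse of $G(t)=\int_0^t g(\tau)\,d\tau$, which is a global $C^2$ diffeomorphism on $\R$ by assumption $(g)$) and perform the substitution $v = G(u)$, i.e.\ $u = G^{-1}(v)$. Since $|\nabla v|^2 = g^2(u)|\nabla u|^2$, the quasilinear energy functional associated with \eqref{eq1} transforms into the semilinear functional
\begin{equation*}
J(v) = \frac12\int_{\R^N}\bigl(|\nabla v|^2 + V(x)\,G^{-1}(v)^2\bigr)\,dx - \int_{\R^N} F\bigl(x,G^{-1}(v)\bigr)\,dx - \frac{1}{2^*}\int_{\R^N}|v|^{2^*}\,dx,
\end{equation*}
which is well defined and $C^1$ on $H^1(\R^N)$ once one establishes the elementary pointwise bounds on $G^{-1}$ (namely $|G^{-1}(t)|\le |t|$ for all $t$, $G^{-1}(t)/t \to 1$ as $t\to 0$, and $G^{-1}(t)/|t|$ bounded, together with the behaviour of $g(G^{-1}(t))$). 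Critical points of $J$ correspond to weak solutions of \eqref{eq1}. I would then reformulate the hypotheses $(f_2)$--$(f_4)$ in terms of the transformed nonlinearity $h(x,v) := f(x,G^{-1}(v))/g(G^{-1}(v)) + |v|^{2^*-2}v$; crucially $(f_3)$ and the convexity-type properties of $G^{-1}$ should give that $v\mapsto h(x,v)/v$ is non-decreasing on $(0,\infty)$, which is exactly the structural condition needed to run the Szulkin--Weth machinery on the Nehari manifold $\mathcal{N} = \{v\in H^1(\R^N)\setminus\{0\} : J'(v)v = 0\}$.

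Next I would carry out the variational argument in several steps. Step 1: verify the Nehari manifold is nonempty, that each ray $t\mapsto J(tv)$ for $t>0$ has a unique maximum lying on $\mathcal{N}$, and that $c := \inf_{\mathcal{N}} J$ admits the minimax characterization $c = \inf_{v\ne 0}\max_{t>0} J(tv) = \inf_{\gamma}\max_{[0,1]} J\circ\gamma$ over paths from $0$ to a point where $J<0$. Step 2: use assumption $(f_4)$ — the dimension-dependent super-quadratic/super-quartic growth of $F$ near infinity, tested against suitably rescaled truncated Talenti instantons supported in $\Omega$ — to prove the strict inequality $c < \frac{1}{N}S^{N/2}$, where $S$ is the best Sobolev constant; this is the Br\'ezis--Nirenberg-type level estimate that will later restore compactness. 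Step 3: extract a bounded Palais--Smale sequence $(v_n)$ at level $c$; boundedness follows from the Nehari constraint together with $(f_3)$ in the now-standard way (so no Ambrosetti--Rabinowitz condition is needed). Step 4: apply Lions's concentration--compactness/vanishing lemma to $(v_n)$; the level estimate from Step 2 rules out the purely concentrating (critical-bubble) scenario, and a standard argument shows vanishing would force $c\le 0$, a contradiction, so $(v_n)$ is nonvanishing. Step 5: by $\Z^N$-periodicity of $V$ and $f$, translate $(v_n)$ by integer vectors so that a translated sequence converges weakly (and, after the usual Brezis--Lieb splitting, we may extract a genuine) nonzero critical point $v$ of $J$ with $J(v)\le c$; since $v\in\mathcal{N}$ gives $J(v)\ge c$, we get $J(v)=c$, i.e.\ $v$ is a ground state. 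Step 6: replace $v$ by $|v|$ (using that $J(|v|)\le J(v)$ and $|v|\in\mathcal{N}$) and invoke the strong maximum principle on the equation satisfied by $u=G^{-1}(v)$ to conclude $u>0$; undoing the change of variables yields a positive ground state of \eqref{eq1}.

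The main obstacle I anticipate is Step 4 — proving nonvanishing of the bounded Palais--Smale sequence in the presence of the Sobolev-critical term. Unlike the subcritical periodic case where nonvanishing is almost automatic, here vanishing of $(v_n)$ does not immediately kill the critical term; one must combine the $L^{2^*}$-estimate coming from the Palais--Smale condition with the energy level $c < \frac{1}{N}S^{N/2}$ to exclude the dichotomy in which all mass escapes to infinity as a rescaled Sobolev bubble. Making the interplay between the Br\'ezis--Nirenberg threshold estimate of Step 2 and the concentration--compactness alternative precise — in particular handling the cross terms produced by the transformed nonlinearity $F(x,G^{-1}(v))$, whose growth is controlled only through the delicate asymptotics of $G^{-1}$ — is the technical heart of the proof. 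A secondary difficulty is that, since $f$ is merely continuous (not $C^1$), $\mathcal{N}$ need not be a $C^1$-manifold, so all deformation/minimax arguments must be run in the homeomorphism-to-the-sphere framework of Szulkin--Weth \cite{AS,monm} rather than via classical Lagrange multipliers.
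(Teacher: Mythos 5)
Your proposal tracks the paper's argument very closely: the change of variables $v=G(u)$, the Szulkin--Weth Nehari framework (rather than Lagrange multipliers, since $f$ is only continuous), boundedness of Palais--Smale sequences from the Nehari constraint and $(f_3)$, the Br\'ezis--Nirenberg level estimate $c<\tfrac1N S^{N/2}$ via truncated Talenti bubbles supported in $\Omega$ and the dimension-dependent growth in $(f_4)$, Lions's vanishing/nonvanishing dichotomy, and translation by $\Z^N$ using periodicity. This is essentially the paper's proof.

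One imprecision worth flagging in your Step 4 outline: you write that ``a standard argument shows vanishing would force $c\le 0$,'' treating the level estimate as something that separately rules out ``concentration.'' In this critical-growth setting vanishing does \emph{not} force $c\le 0$, because vanishing only annihilates the subcritical pieces of $H(x,v_n)$ (via Lions's lemma and interpolation); the critical term $\int|v_n|^{2^*}$ survives. The correct mechanism, which is what the paper does and which you in fact describe correctly in your ``main obstacle'' paragraph, is: vanishing plus the Palais--Smale identities give $\lim\|v_n\|^2=\lim\int|v_n|^{2^*}=Nc$, and then the Sobolev inequality forces $c\ge\tfrac1N S^{N/2}$, contradicting Lemma~\ref{le10}. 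So the level estimate is precisely what excludes vanishing (there is no separate ``concentration'' alternative for a bounded sequence in Lions's dichotomy). Your outline and your obstacle paragraph are mutually inconsistent on this point; the obstacle paragraph has it right.

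Two smaller remarks. First, the Brezis--Lieb splitting you mention in Step 5 is not needed: once nonvanishing gives a nonzero weak limit $v$ of the translated sequence, equivariance of $I'$ under the $\Z^N$-action yields $I'(v)=0$, hence $v\in\N$ and $I(v)\ge c$, while Fatou applied to $I(v_n)-\tfrac12\langle I'(v_n),v_n\rangle$ (using Lemma~\ref{le2}(5)) gives $I(v)\le c$. Second, for positivity the paper does not pass to $|v|$; instead it sets $f(x,t)=0$ for $t\le0$ from the outset, tests $I'(v)=0$ with $\varphi=-v^-$ to conclude $v^-=0$, and then applies the strong maximum principle to the equation for $v$. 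Passing to $|v|$ would require separately verifying $|v|\in\N$ and $I(|v|)\le I(v)$, which is avoidable here; but both routes are standard and lead to the same conclusion.
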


This paper is organized as follows. In Section 2, we first introduce some basic spaces and use a change of variable to reduce the quasilinear problem to a semilinear problem which has an associated functional well defined in $H^1(\R^N)$, then we present some useful preliminary lemmas. In Section 3, we give estimates on the minimax level value of the functional associated with the modified equation, which ensure that this level value is below of a certain critical values. In Section 4, we give the proof of our main Theorem \ref{th1}.

\section{Preliminaries}

In this section, we will introduce some basic spaces. By applying a change of variable, the quasilinear problem is reduced to a semilinear problem, which the associated functional is well defined in $H^1(\R^N)$. Based on our basic assumptions, we will give some properties for functions $G, G^{-1}$ and $h$ which will be defined in \eqref{eq7}. Then we present some useful lemmas for the method of Nehari manifold.

Now let us recall some basic notions. We denote by $L^p(\R^N)$ the usual Lebesgue space endowed with the norms $\left\|u\right\|_p=(\int_{\R^N}|u|^pdx)^{\frac{1}{p}}$, $1\leq p<\infty$ and $\left\|u\right\|_\infty=\mbox{ess}\sup\limits_{x\in\R^N}|u(x)|$. Let $H^1(\R^N)$ denotes the Sobolev space modeled in $L^2(\R^N)$ with its usual norm $\left\|u\right\|_{H^1}=\left(\int_{\R^N}(|\nabla u|^2+u^2)dx\right)^{\frac{1}{2}}$.

It is well known that in the study of the elliptic equations, the potential function V plays an important role in dealing with the compactness problem. Let us introduce the following working space
\begin{equation*}
E=\{u\in H^1(\R^N): \int_{\R^N}V(x)u^2dx<\infty\},
\end{equation*}
endowed with the norm
\begin{equation*}
\left\|u\right\|=\left(\int_{\R^N}(|\nabla u|^2+V(x)u^2)dx\right)^{\frac{1}{2}}.
\end{equation*}
It has been proved in \cite{Tang} that if the assumption $(V)$ holds, then there exist two positive constants $d_1$ and $d_2$ such that $d_1\|u\|_{H^1}^2\leq\|u\|^2\leq d_2\|u\|_{H^1}^2$ for all $u\in E$. Therefore from the Sobolev embedding, we obtain that $E\hookrightarrow L^\gamma(\R^N)$ is continuous for any $\gamma\in[2,2^*]$.

We observe that problem \eqref{eq1} is formally the Euler-Lagrange equation associated with the energy functional

\begin{eqnarray*}
J(u)&=&\frac{1}{2}\int_{\R^N}g^2(u)|\nabla u|^2dx+\frac{1}{2}\int_{\R^N}V(x)|u|^2dx -\int_{\R^N}F(x,u)dx
\\&&-\frac{1}{2^*}\int_{\R^N}|G(u)|^{2^*}dx.
\end{eqnarray*}

From the variational point of view, $J$ may not be well defined in $H^1(\R^N)$, hence our first difficulty associated with problem \eqref{eq1} is finding an appropriate function space where the functional $J$ is well defined. To overcome this difficulty, we use a unified new change of variable constructed by Shen and Wang in \cite{Shen}, as

\begin{equation*}
v=G(u)=\int_{0}^{u}g(t)dt.
\end{equation*}
Then we obtain the following functional
\begin{eqnarray}
\nonumber I(v)&=&\frac{1}{2}\int_{\R^N}|\nabla v|^2dx+\frac{1}{2}\int_{\R^N}V(x)|G^{-1}(v)|^2dx- \int_{\R^N}F(x,G^{-1}(v))dx
\\&&-\frac{1}{2^*}\int_{\R^N}|v|^{2^*}dx,\label{eq2}
\end{eqnarray}
where $G^{-1}(v)$ is the inverse function of $G(u)$. Since $g$ is a nondecreasing positive function, we get $|G^{-1}(v)|\leq\frac{1}{g(0)}|v|$ which will be proved in detail later. From this and our assumptions, it is clear that $I$ is well defined in $H^1(\R^N)$ and $I\in C^1$.

If u is a nontrivial solution of \eqref{eq1}, then it should satisfy
\begin{eqnarray}
\nonumber\langle J'(u), \psi\rangle&=&\int_{\R^N}\left[g^2(u)\nabla u\nabla \psi+g(u)g'(u)|\nabla u|^2\psi+V(x)u\psi-f(x,u)\psi\right]dx
\\\nonumber&&+\int_{\R^N}g(u)|G(u)|^{2^*-2}G(u)\psi dx
\\&=&0 \label{eq3}
\end{eqnarray}
for all $\psi\in C_0^\infty(\R^N)$.

We show that for all $\varphi\in C_0^\infty(\R^N)$, \eqref{eq3} is equivalent to
\begin{eqnarray}
\nonumber\langle I'(v), \varphi\rangle&=&\int_{\R^N}\left[\nabla v\nabla\varphi+V(x)\frac{G^{-1}(v)}{g(G^{-1}(v))}\varphi- \frac{f(x,G^{-1}(v))}{g(G^{-1}(v))}\varphi\right]dx
\\\nonumber&&-\int_{\R^N}|v|^{2^*-2}v\varphi dx
\\&=&0. \label{eq4}
\end{eqnarray}
Indeed, if we choose $\psi=\frac{1}{g(u)}\varphi$ in \eqref{eq3}, then we immediately get \eqref{eq4}. On the other hand, since $u=G^{-1}(v)$, if we let $\varphi=g(u)\psi$ in \eqref{eq4} then we get \eqref{eq3}. Therefore, in order to find the nontrivial solutions of \eqref{eq1}, it is sufficient to study the existence of the nontrivial solutions of the following modified equations
\begin{equation}\label{eq5}
-\triangle v+V(x)\frac{G^{-1}(v)}{g(G^{-1}(v))}-\frac{f(x,G^{-1}(v))}{g(G^{-1}(v))}-|v|^{2^*-2}v=0.
\end{equation}

We can easily verify that \eqref{eq1} is equivalent to problem \eqref{eq5} and the nontrivial critical points of $I(v)$ are the nontrivial solutions of problem \eqref{eq5}. Hence in the following, we only need to find ground state solutions for problem \eqref{eq5}.

Since we are going to discuss the existence of positive ground state solutions of problem \eqref{eq5}, we  assume that $f(x,t)=0$ for all $(x,t)\in\R^N\times\R^-$. We claim that all nontrivial critical points of $I$ are the positive solutions of \eqref{eq5}. In fact, let $v\in H^1(\R^N)$ be a nontrivial critical point of $I$, if we take $\varphi=-v^-$ in \eqref{eq4}, where $v^-:=\max\{-v,0\}$, we obtain
\begin{equation*}
\int_{\R^N}\left(|\nabla v^-|^2-|v^-|^{2^*}+V(x)\frac{G^{-1}(v)}{g(G^{-1}(v))}(-v^-)\right)dx=0.
\end{equation*}
Since $G^{-1}(v)(-v^-)\geq0$, we have
\begin{equation*}
\int_{\R^N}(|\nabla v^-|^2-|v^-|^{2^*})dx=0\quad \mbox{and}\quad \int_{\R^N}V(x)\frac{G^{-1}(v)}{g(G^{-1}(v))}(-v^-)dx=0.
\end{equation*}
Hence we conclude that $v^-=0$ almost everywhere in $\R^N$, therefore we have $v=v^+\geq0$ where $v^+:=\max\{v,0\}$. Then $v$ must be a nonnegative solution of
\begin{equation}\label{eq6}
-\triangle v+V(x)v=V(x)\left(v-\frac{G^{-1}(v)}{g(G^{-1}(v))}\right)+\frac{f(x,G^{-1}(v))}{g(G^{-1}(v))}+ (v^+)^{2^*-1}.
\end{equation}
Standard regularity argument shows that $v\in C^2(\R^N)$. Moreover, the assumption $(g), (f_2)$ and Lemma \ref{le1} $(2)$ show that the right hand side of \eqref{eq6} is nonnegative since
\begin{equation*}
v-\frac{G^{-1}(v)}{g(G^{-1}(v))}=G(u)-\frac{u}{g(u)}=\frac{G(u)g(u)-u}{g(u)}
\end{equation*}
and
\begin{equation*}
G(u)g(u)>\frac{G^2(u)}{u}>u.
\end{equation*}
Therefore, by the strong maximum principle in \cite{smp}, we know that $v$ is positive.

We define the Nehari manifold
\begin{equation*}
\N:=\{v\in E\setminus\{0\}: \langle I'(v), v\rangle=0\}.
\end{equation*}
Since we do not know whether $\N$ is of class $C^1$ under our assumptions, we cannot use the minimax theory directly on $\N$. To overcome this difficulty, we employ a similar argument developed by Szulkin and Weth in \cite{AS,monm}.

Here we present some useful lemmas before we prove the existence of positive ground state solutions for problem \eqref{eq1}.
\begin{lemma}\label{le1}
We note that $s=G(t)$. The functions $g(t), G(t)=\int_0^tg(\tau)d\tau$ and $G^{-1}(s)$ satisfy the following properties by the assumption $(g)$:

$(1)$ the functions $G(t)$ and $G^{-1}(s)$ are strictly increasing and odd;

$(2)$ $G(t)\leq g(t)t$ for all $t\geq0$;

$(3)$ $|G^{-1}(s)|\leq\frac{1}{g(0)}|s|=|s|$ for all $s\in\R$;

$(4)$ $\frac{G^{-1}(s)}{sg(G^{-1}(s))}$ is non-increasing for $s>0$;

$(5)$ $\frac{G^{-1}(s)}{s}$ is non-increasing on $(0,+\infty)$ and
\begin{equation*}
\lim_{s\rightarrow0}\frac{G^{-1}(s)}{s}=\frac{1}{g(0)},\quad  \lim_{s\rightarrow+\infty}\frac{G^{-1}(s)}{s}=\left\{\begin{matrix}\frac{1}{g(\infty)}&if\ g\ is\ bounded; \\0&\ \ \ if\  g\ is\ unbounded.\end{matrix}\right.
\end{equation*}

\end{lemma}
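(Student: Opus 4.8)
The plan is to verify the five properties of Lemma~\ref{le1} one at a time, relying only on the assumption $(g)$: that $g\in C^1(\R,\R^+)$ is even, nondecreasing on $[0,\infty)$ (since $g'\ge 0$ there), and $g(0)=1$.

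First, for $(1)$: since $g(\tau)>0$ everywhere, $G'(t)=g(t)>0$, so $G$ is strictly increasing; because $g$ is even, $G$ is odd, and consequently $G^{-1}$ is strictly increasing and odd as well, being the inverse of a strictly increasing odd bijection of $\R$. For $(2)$: fix $t\ge 0$ and note $G(t)=\int_0^t g(\tau)\,d\tau\le \int_0^t g(t)\,d\tau=g(t)t$, using that $g$ is nondecreasing so $g(\tau)\le g(t)$ for $\tau\in[0,t]$. For $(3)$: write $s=G(t)$ with $t=G^{-1}(s)$; for $s\ge 0$ we have $t\ge 0$ and $|s|=G(t)=\int_0^t g(\tau)\,d\tau\ge \int_0^t g(0)\,d\tau = g(0)t = t = |G^{-1}(s)|$, again because $g$ is nondecreasing on $[0,\infty)$ with $g(0)=1$; the case $s<0$ follows by oddness. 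This already gives $|G^{-1}(s)|\le|s|$, which is exactly what is needed to see $I$ is well defined on $H^1(\R^N)$.

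For $(4)$: set $\phi(s)=\dfrac{G^{-1}(s)}{s\,g(G^{-1}(s))}$ for $s>0$ and substitute $t=G^{-1}(s)$, $s=G(t)$, so $\phi(s)=\dfrac{t}{G(t)g(t)}$; since $s\mapsto t=G^{-1}(s)$ is increasing, it suffices to show $t\mapsto \dfrac{t}{G(t)g(t)}$ is nonincreasing on $(0,\infty)$, equivalently that $t\mapsto \dfrac{G(t)g(t)}{t}$ is nondecreasing. Differentiating, the sign of the derivative of $G(t)g(t)/t$ is that of $(g(t)^2+G(t)g'(t))t - G(t)g(t) = t g(t)^2 - G(t)g(t) + t G(t) g'(t)$; the first two terms combine to $g(t)\bigl(tg(t)-G(t)\bigr)\ge 0$ by $(2)$, and the last term is $\ge 0$ since $g'(t)\ge 0$, $G(t)\ge 0$, $t\ge 0$. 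Hence $\phi$ is nonincreasing. For $(5)$: the substitution $t=G^{-1}(s)$ turns $\dfrac{G^{-1}(s)}{s}$ into $\dfrac{t}{G(t)}$, and monotonicity of $t\mapsto t/G(t)$ amounts to $t\mapsto G(t)/t$ being nondecreasing, which follows because $\bigl(G(t)/t\bigr)' = (tg(t)-G(t))/t^2\ge 0$ by $(2)$; the limit as $s\to 0$ is $\lim_{t\to 0} t/G(t) = 1/G'(0) = 1/g(0)$ by L'Hôpital (or the definition of derivative), and the limit as $s\to\infty$ is $\lim_{t\to\infty} t/G(t)$, which by L'Hôpital equals $\lim_{t\to\infty} 1/g(t) = 1/g(\infty)$ when $g$ is bounded (the limit $g(\infty)=\lim_{t\to\infty}g(t)$ exists because $g$ is monotone) and equals $0$ when $g$ is unbounded.

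None of these steps is genuinely hard; the only point requiring a little care is $(4)$, where the monotonicity must be reduced via the change of variable $t=G^{-1}(s)$ and then the derivative computation organized so that each piece is manifestly signed using $(2)$ and $g'\ge 0$. I would present $(1)$--$(3)$ briefly, then do the change of variables once and reuse it for $(4)$ and $(5)$, flagging in $(5)$ that $g(\infty)$ is well defined precisely because $g$ is monotone and positive.
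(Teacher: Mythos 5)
Your proof is correct and follows essentially the same strategy as the paper's: reduce each monotonicity claim to a derivative sign check using property $(2)$ and $g'\ge 0$, and compute the limits in $(5)$ by L'H\^opital. The only minor stylistic departures are in $(3)$, where you bound $G(t)$ by a direct integral comparison rather than the paper's mean value theorem, and in $(4)$, where you differentiate $G(t)g(t)/t$ directly instead of chaining the inequality $G(t)(\ldots)\le g(t)t(\ldots)\le t$ as the paper does; both are valid and your version of $(4)$ is arguably cleaner since it avoids the implicit sign assumption in the paper's first inequality.
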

\begin{proof}
The points $(1)$ and $(2)$ are immediate by the definitions of $G(t)$ and the assumption ($g$).

By the differential mean value theorem, we have
\begin{equation*}
|G^{-1}(s)|=|G^{-1}(s)-G^{-1}(0)|=(G^{-1})^{'}(\xi)|s|=\frac{1}{g(G^{-1}(\xi))}|s|\leq\frac{1}{g(0)}|s| =|s|
\end{equation*}
for all $s\in\R$, where $\xi\in(0,s)$, then $(3)$ is proved.

Since $G(t)\leq g(t)t$ and $g'(t)\geq0$ for all $t\geq0$, we have that
\begin{equation*}
G(t)\left(\frac{1}{g^2(t)}\left(g(t)-g'(t)t\right)\right)\leq g(t)t\left(\frac{1}{g^2(t)}\left(g(t)-g'(t)t\right)\right)\leq t\quad \mbox{for}\ t\geq0,
\end{equation*}
which gives that
\begin{equation*}
G(t)\left(\frac{t}{g(t)}\right)_t^{'}\frac{1}{g(t)}\leq\frac{t}{g(t)}.
\end{equation*}
Note that $s=G(t)$, for $s>0$ we have
\begin{equation*}
G(t)\left(\frac{t}{g(t)}\right)_s^{'}\leq\frac{t}{g(t)},
\end{equation*}
and hence
\begin{equation*}
s\left(\frac{G^{-1}(s)}{g(G^{-1}(s))}\right)_s^{'}\leq\frac{G^{-1}(s)}{g(G^{-1}(s))}.
\end{equation*}
Denote $q(s)=\frac{G^{-1}(s)}{g(G^{-1}(s))}$, therefore we have $sq'(s)\leq q(s)$ for $s>0$. Let $Q(s)=\frac{q(s)}{s}$, then by a simple process of differentiation, we obtain that
\begin{equation*}
Q'(s)=\frac{q'(s)s-q(s)}{s^2}\leq0\quad\mbox{for}\ s>0.
\end{equation*}
Then we have point $(4)$.

It follows from point $(2)$ that
\begin{equation*}
\left(\frac{(G^{-1}(s)}{s}\right)_s^{'}=\frac{1}{g(t)}\left(\frac{t}{G(t)}\right)_t^{'}=
\frac{1}{g(t)G^2(t)}(G(t)-g(t)t)\leq0\quad\mbox{for all}\ t\geq0,
\end{equation*}
and using the L'Hospital rule we know point $(5)$ is satisfied. The proof is completed.
\end{proof}

Denote
\begin{equation}\label{eq7}
h(x,s)=V(x)s-V(x)\frac{G^{-1}(s)}{g(G^{-1}(s))}+\frac{f(x,G^{-1}(s))}{g(G^{-1}(s))},
\end{equation}
then
\begin{equation}\label{eq8}
H(x,s)=\int_0^sh(x,\tau)d\tau=\frac{1}{2}V(x)[s^2-(G^{-1}(s))^2]+F(s,G^{-1}(s)).
\end{equation}
Consequently, from \eqref{eq2} we have
\begin{eqnarray}
\nonumber I(v)&=&\frac{1}{2}\int_{\R^N}\left(|\nabla v|^2+V(x)v^2\right)dx-\int_{\R^N}H(x,v)dx-\frac{1}{2^{*}}\int_{\R^N}|v|^{2^{*}}dx
\\&=&\frac{1}{2}\|v\|^2-\int_{\R^N}H(x,v)dx-\frac{1}{2^{*}}\int_{\R^N}|v|^{2^{*}}dx. \label{eq9}
\end{eqnarray}
\begin{lemma}\label{le2}
Assume that $(g), (V)$ and $(f_1)-(f_4)$ hold, the functions $h(x,s)$ and $H(x,s)$ enjoy the following properties:

$(1)$ $\lim\limits_{s\rightarrow0^+}\frac{h(x,s)}{s}=0$ and $\lim\limits_{s\rightarrow+\infty}\frac{h(x,s)}{s^{2^*-1}}=0$ uniformly in $x\in\R^N$;

$(2)$ $\lim\limits_{s\rightarrow0^+}\frac{H(x,s)}{s^2}=0$ and $\lim\limits_{s\rightarrow+\infty}\frac{H(x,s)}{s^{2^*}}=0$ uniformly in $x\in\R^N$;

$(3)$ $s\mapsto\frac{h(x,s)}{s}$ is non-decreasing on $(0,+\infty)$;

$(4)$ for an open bounded set $\Omega\subset\R^N$,

\ \ \ \ $\lim\limits_{s\rightarrow+\infty}\frac{H(x,s)}{s^4}=+\infty$ uniformly for $x\in\Omega$, if $N=3$,

\ \ \ \ $\lim\limits_{s\rightarrow+\infty}\frac{H(x,s)}{s^2\ln s}=+\infty$ uniformly for $x\in\Omega$, if $N=4$,

\ \ \ \ $\lim\limits_{s\rightarrow+\infty}\frac{H(x,s)}{s^2}=+\infty$ uniformly for $x\in\R^N$, if $N\geq5$;

$(5)$ $\frac{1}{2}h(x,s)s\geq H(x,s)\geq0$ for all $(x,s)\in\R^N\times\R$;

$(6)$For any $\delta>0$, there exists $C_\delta>0$ and $\alpha\in(2,2^*)$ such that

\ \ \ \ $0\leq h(x,s)\leq \delta|s|+C_\delta|s|^{2^*-1}$, for any $(x,s)\in\R^N\times\R$,

\ \ \ \ $0\leq H(x,s)\leq \frac{\delta}{2}|s|^2+\frac{C_\delta}{2^*}|s|^{2^*}$, for any $(x,s)\in\R^N\times\R$,

\ \ \ \ $0\leq H(x,s)\leq \frac{\delta}{2}|s|^2+\frac{\delta}{2^*}|s|^{2^*}+\frac{C_\delta}{\alpha}|s|^\alpha$, for any $(x,s)\in\R^N\times\R$.

\end{lemma}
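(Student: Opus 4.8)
The plan is to verify the six properties of $h$ and $H$ by pulling everything back to statements about $g$, $G$, $G^{-1}$, $f$ and $F$ via the substitution $s = G(t)$, $t = G^{-1}(s)$, then invoking the assumptions $(g)$, $(V)$, $(f_1)$--$(f_4)$ together with Lemma~\ref{le1}. Recall from \eqref{eq7}--\eqref{eq8} that
\[
h(x,s) = V(x)\Bigl(s - \tfrac{G^{-1}(s)}{g(G^{-1}(s))}\Bigr) + \tfrac{f(x,G^{-1}(s))}{g(G^{-1}(s))}, \qquad H(x,s) = \tfrac{1}{2}V(x)\bigl[s^2 - (G^{-1}(s))^2\bigr] + F(x,G^{-1}(s)),
\]
so each property splits into a ``potential part'' coming from the first term and a ``nonlinearity part'' coming from $f$ and $F$. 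For $(1)$, I would write $\frac{h(x,s)}{s} = V(x)\bigl(1 - \frac{G^{-1}(s)}{s\,g(G^{-1}(s))}\bigr) + \frac{f(x,G^{-1}(s))}{s\,g(G^{-1}(s))}$; as $s\to 0^+$ we have $G^{-1}(s)\to 0$, $g(G^{-1}(s))\to g(0)=1$ and $\frac{G^{-1}(s)}{s}\to 1$ by Lemma~\ref{le1}$(5)$, so the potential part vanishes, while the nonlinearity part vanishes because $\frac{f(x,t)}{g(t)G(t)}\to 0$ by $(f_2)$ and $\frac{f(x,G^{-1}(s))}{s\,g(G^{-1}(s))} = \frac{f(x,t)}{g(t)G(t)}$ exactly. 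As $s\to +\infty$, I would divide by $s^{2^*-1}$: the potential part is $O(s^{2-2^*})\to 0$ (using Lemma~\ref{le1}$(3)$ to bound $|G^{-1}(s)/g(G^{-1}(s))| \le |s|$), and the nonlinearity part equals $\frac{f(x,t)}{g(t)|G(t)|^{2^*-1}}\to 0$ by $(f_2)$. Property $(2)$ follows from $(1)$ by L'Hospital or by integrating the estimates in $(1)$.

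For $(3)$, monotonicity of $s\mapsto h(x,s)/s$, the potential part is $V(x)\bigl(1 - \frac{G^{-1}(s)}{s\,g(G^{-1}(s))}\bigr)$, which is non-decreasing precisely because $\frac{G^{-1}(s)}{s\,g(G^{-1}(s))}$ is non-increasing by Lemma~\ref{le1}$(4)$; the nonlinearity part is $\frac{f(x,G^{-1}(s))}{s\,g(G^{-1}(s))} = \frac{f(x,t)}{G(t)g(t)}$, and since $t = G^{-1}(s)$ is increasing in $s$ and $t\mapsto \frac{f(x,t)}{g(t)G(t)}$ is non-decreasing by $(f_3)$, the composition is non-decreasing in $s$. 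Property $(4)$ is a direct translation of $(f_4)$: since $H(x,s) \ge F(x,G^{-1}(s))$ (the potential part is nonnegative — this needs the inequality $G^{-1}(s) < s$, equivalently $g(t)t > G(t)$, from Lemma~\ref{le1}$(2)$, wait, more precisely one needs $s^2 \ge (G^{-1}(s))^2$, which is Lemma~\ref{le1}$(3)$), and writing $t = G^{-1}(s)$ so that $s = G(t)$ and $s\to\infty \iff t\to\infty$, the three cases $\frac{H(x,s)}{s^4} \ge \frac{F(x,t)}{G(t)^4}$, $\frac{H(x,s)}{s^2\ln s} \ge \frac{F(x,t)}{G(t)^2\ln G(t)}$, $\frac{H(x,s)}{s^2}\ge \frac{F(x,t)}{G(t)^2}$ tend to $+\infty$ by $(f_4)$ (for $N=4$ one also uses $\ln s = \ln G(t)$).

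Property $(5)$, the ``half-derivative'' inequality $\frac{1}{2}h(x,s)s \ge H(x,s)\ge 0$, is the main obstacle and the place where $(f_3)$ is used in its sharpest form. For $H \ge 0$: the potential part is $\ge 0$ by Lemma~\ref{le1}$(3)$, and $F(x,t) = \int_0^t f(x,\tau)\,d\tau \ge 0$ because $(f_3)$ plus $\lim_{t\to 0} f(x,t)/(g(t)G(t)) = 0$ forces $f(x,t)\ge 0$ for $t\ge 0$ (and $f = 0$ for $t<0$ by the running assumption). For the upper bound, I would prove it separately for the two parts. The nonlinearity part needs $\tfrac12 \frac{f(x,t)}{g(t)G(t)}\,G(t)^2 \ge F(x,t)$, i.e. $\tfrac12 \frac{f(x,t)}{g(t)G(t)} G(t)^2 \ge \int_0^t f(x,\tau)\,d\tau$; substituting $\sigma = G(\tau)$ on the right gives $\int_0^{G(t)} \frac{f(x,G^{-1}(\sigma))}{g(G^{-1}(\sigma))}\,d\sigma = \int_0^{s}\frac{f(x,G^{-1}(\sigma))}{\sigma g(G^{-1}(\sigma))}\,\sigma\,d\sigma$, and since $\frac{f(x,G^{-1}(\sigma))}{\sigma g(G^{-1}(\sigma))}$ is non-decreasing in $\sigma$ by $(f_3)$ (as shown for property $(3)$), this integral is $\le \frac{f(x,G^{-1}(s))}{s g(G^{-1}(s))}\int_0^s \sigma\,d\sigma = \frac12 \frac{f(x,t)}{g(t)G(t)}\,s^2$, which is exactly the claim. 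The potential part needs $\tfrac12 V(x)\bigl(s - \frac{G^{-1}(s)}{g(G^{-1}(s))}\bigr)s \ge \tfrac12 V(x)(s^2 - (G^{-1}(s))^2)$, i.e. (dividing by $\tfrac12 V(x) > 0$) $s^2 - \frac{s\,G^{-1}(s)}{g(G^{-1}(s))} \ge s^2 - (G^{-1}(s))^2$, i.e. $(G^{-1}(s))^2 \ge \frac{s\,G^{-1}(s)}{g(G^{-1}(s))}$, i.e. $G^{-1}(s)\,g(G^{-1}(s)) \ge s = G(G^{-1}(s))$, which is precisely Lemma~\ref{le1}$(2)$ applied at $t = G^{-1}(s)\ge 0$. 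Finally, property $(6)$: the bound $0\le h(x,s)\le \delta|s| + C_\delta|s|^{2^*-1}$ follows by combining the limits in property $(1)$ (small $s$ gives the $\delta|s|$ term, large $s$ gives $C_\delta|s|^{2^*-1}$, and continuity handles the compact middle range), and the first $H$-estimate follows by integration; the third, finer $H$-estimate with an intermediate exponent $\alpha\in(2,2^*)$ follows from the second by Young's inequality, writing $\frac{C_\delta}{2^*}|s|^{2^*} = \bigl(\text{small coeff}\bigr)|s|^2 \cdot \bigl|s\bigr|^{2^*-2}$ — more carefully, interpolating $|s|^\alpha$ between $|s|^2$ and $|s|^{2^*}$ and absorbing, so that any fixed fraction of the $|s|^{2^*}$ mass at ``moderate'' $|s|$ is dominated by $\frac{\delta}{2^*}|s|^{2^*} + \frac{C_\delta}{\alpha}|s|^\alpha$. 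The only genuinely delicate point throughout is making sure every monotonicity statement about $f$ is transported correctly through the increasing change of variable $t = G^{-1}(s)$, and that the potential contributions to $(5)$ reduce exactly to Lemma~\ref{le1}$(2)$ and $(3)$.
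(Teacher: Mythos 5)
Your proof is correct and, for properties $(1)$--$(4)$ and $(6)$, follows essentially the same path as the paper: translate back to $t=G^{-1}(s)$, use Lemma~\ref{le1} for the potential part and $(f_2)$--$(f_4)$ for the nonlinearity part. Where you genuinely diverge is property $(5)$. You split $h$ and $H$ into a potential piece and a nonlinearity piece and prove $\tfrac12 h(x,s)s\ge H(x,s)\ge 0$ separately for each: the potential piece reduces exactly to $G^{-1}(s)g(G^{-1}(s))\ge s$, i.e.\ Lemma~\ref{le1}$(2)$, and the nonlinearity piece reduces, after the substitution $\sigma=G(\tau)$, to the monotonicity in $(f_3)$. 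The paper instead proves the inequality in one stroke by integrating the monotone quotient $h(x,\tau)/\tau$ (already established as property $(3)$): $H(x,s)=\int_0^s\frac{h(x,\tau)}{\tau}\,\tau\,d\tau\le\frac{h(x,s)}{s}\int_0^s\tau\,d\tau=\tfrac12 h(x,s)s$ (the paper writes this with $g(\tau)G(\tau)$ in place of $\tau$, a notational slip, but the intent is this). Your decomposition is a bit longer but makes explicit where Lemma~\ref{le1}$(2)$ versus $(f_3)$ each enter, and you also explicitly justify $H\ge 0$ (via $f\ge 0$, deduced from $(f_3)$ together with the vanishing limit in $(f_2)$), which the paper leaves implicit. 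One minor quibble: for the third bound in $(6)$ your appeal to ``Young's inequality/interpolation'' is an unnecessarily roundabout description of what is really just a three-range argument --- the limits in $(2)$ give $\tfrac{\delta}{2}|s|^2$ near $0$ and $\tfrac{\delta}{2^*}|s|^{2^*}$ near $\infty$, and on the remaining compact range of $|s|$ the continuous ratio $H(x,s)/|s|^\alpha$ is bounded uniformly in $x$ by periodicity, which supplies $C_\delta$.
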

\begin{proof}
Note that $s=G(t)$, from $(g), (f_2)$ and Lemma \ref{le1} $(5)$, we have
\begin{eqnarray*}
\lim_{s\rightarrow0^+}\frac{h(x,s)}{s}&=& \lim_{s\rightarrow0^+}\left[V(x)\left(1-\frac{G^{-1}(s)}{sg(G^{-1}(s))}\right) +\frac{f(x,G^{-1}(s))}{sg(G^{-1}(s))}\right]
\\&=&V(x)\left(1-\frac{1}{g^2(0)}\right)+\lim_{t\rightarrow0^+}\frac{f(x,t)}{g(t)G(t)}
\\&=&0,
\end{eqnarray*}
and
\begin{eqnarray*}
\lim_{s\rightarrow+\infty}\frac{h(x,s)}{s^{2^*-1}}&=&\lim_{s\rightarrow+\infty}\left[\frac{V(x)} {s^{2^*-2}} -V(x)\frac{G^{-1}(s)}{sg(G^{-1}(s))}\frac{1}{s^{2^*-2}}+\frac{f(x,G^{-1}(s))}{g(G^{-1}(s))}\frac{1} {s^{2^*-1}}\right]
\\&=&-V(x)\lim_{s\rightarrow+\infty}\frac{G^{-1}(s)}{sg(G^{-1}(s))}\frac{1}{s^{2^*-2}}+
\lim_{t\rightarrow+\infty}\frac{f(x,t)}{g(t)G(t)^{2^*-1}}
\\&=&0.
\end{eqnarray*}
Therefore, point $(1)$ is proved and point $(2)$ is obvious. We can easily verify that point $(3)$ is according to the assumption $(f_3)$ and Lemma \ref{le1} $(4)$, and point $(4)$ follows from the assumption $(f_4)$.

From $(f_3)$ and Lemma \ref{le1} $(2)$, we have
\begin{eqnarray*}
H(x,s)&=&\int_0^sh(x,\tau)d\tau=\int_0^s\frac{h(x,\tau)}{g(\tau)G(\tau)}g(\tau)G(\tau)d\tau
\\&\leq&\frac{h(x,s)}{g(s)G(s)}\int_0^sG(\tau)dG(\tau)=\frac{h(x,s)G(s)}{2g(s)}\leq\frac{h(x,s)s}{2},
\end{eqnarray*}
then point $(5)$ is proved. Points $(1)$ and $(2)$ imply point $(6)$ directly. Hence the proof is completed.
\end{proof}
\begin{lemma}\label{le3}
For each $v\in E\setminus\{0\}$, there exist a unique $t_v=t(v)>0$ such that $t_vv\in\N$ and $I(t_vv)=\max\limits_{t>0}I(tv)$.

\end{lemma}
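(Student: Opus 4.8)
The plan is to study the fibering map $\varphi_v(t) := I(tv)$ for fixed $v \in E \setminus \{0\}$ and $t > 0$, and show it has a unique positive critical point, which moreover is a global maximum. Using \eqref{eq9}, write
\begin{equation*}
\varphi_v(t) = \frac{t^2}{2}\|v\|^2 - \int_{\R^N} H(x,tv)\,dx - \frac{t^{2^*}}{2^*}\int_{\R^N} |v|^{2^*}\,dx.
\end{equation*}
First I would record that $t_v v \in \N$ is equivalent to $\varphi_v'(t_v) = 0$, since $\langle I'(tv), v\rangle = \frac{1}{t}\langle I'(tv), tv\rangle$ and $\varphi_v'(t) = \langle I'(tv), v\rangle$. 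So it suffices to analyze the zeros of $\varphi_v'$ on $(0,\infty)$.

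Next I would establish the behaviour of $\varphi_v$ near $0$ and near $\infty$. For small $t$: by Lemma \ref{le2}$(2)$ (the estimate $H(x,s) = o(s^2)$ as $s \to 0^+$, together with the growth bound from Lemma \ref{le2}$(6)$), one gets $\int_{\R^N} H(x,tv)\,dx = o(t^2)$ as $t \to 0^+$, so $\varphi_v(t) = \frac{t^2}{2}\|v\|^2 + o(t^2) > 0$ for $t$ small; in particular $\varphi_v(t) > 0$ near $0$ and $\varphi_v'(t) > 0$ near $0$. For large $t$: since $H \geq 0$ by Lemma \ref{le2}$(5)$ and the $t^{2^*}$ term has a negative coefficient with $2^* > 2$, we have $\varphi_v(t) \to -\infty$ as $t \to +\infty$. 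Hence $\varphi_v$ attains its maximum over $(0,\infty)$ at some $t_v > 0$ with $\varphi_v'(t_v) = 0$, giving existence of $t_v$ with $t_v v \in \N$ and $I(t_v v) = \max_{t>0} I(tv)$.

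For uniqueness, the key is the monotonicity encoded in Lemma \ref{le2}$(3)$: the map $s \mapsto h(x,s)/s$ is non-decreasing on $(0,\infty)$. I would rewrite the critical point equation $\varphi_v'(t)=0$, after dividing by $t>0$, as
\begin{equation*}
\|v\|^2 = \int_{\R^N} \frac{h(x,tv)}{tv}\,v^2\,dx + t^{2^*-2}\int_{\R^N}|v|^{2^*}\,dx =: \Psi_v(t).
\end{equation*}
By Lemma \ref{le2}$(3)$ the first integrand is non-decreasing in $t$ (for $v(x) \neq 0$), and $t^{2^*-2}$ is strictly increasing since $2^* > 2$; hence $\Psi_v$ is strictly increasing on $(0,\infty)$. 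Therefore the equation $\|v\|^2 = \Psi_v(t)$ has at most one solution, which forces $t_v$ to be unique, and also confirms that this unique critical point is the global maximum (since $\varphi_v' > 0$ to its left and $\varphi_v' < 0$ to its right). The main obstacle is purely a matter of care with integrability when justifying that these integrals are finite and that one may differentiate under the integral sign and pass to the limit as $t \to 0^+$; this is handled by the uniform bounds in Lemma \ref{le2}$(6)$ together with the continuous embedding $E \hookrightarrow L^\gamma(\R^N)$ for $\gamma \in [2, 2^*]$. Everything else reduces to the elementary one-variable analysis of $\varphi_v$ sketched above.
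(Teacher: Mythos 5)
Your proposal is correct and follows essentially the same route as the paper: you study the fibering map $t\mapsto I(tv)$, show it is positive for small $t$ (via Lemma~\ref{le2}$(6)$ and the Sobolev embedding), tends to $-\infty$ for large $t$ (via Lemma~\ref{le2}$(5)$), and use the monotonicity in Lemma~\ref{le2}$(3)$ for uniqueness. Your uniqueness step is phrased as strict monotonicity of the function $\Psi_v(t)$ rather than, as in the paper, deriving a contradiction from assuming $t_1>t_2>0$ both lie on $\N$, but these are the same argument in slightly different dress.
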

\begin{proof}
For $t>0$, we define
\begin{equation*}
M(t)=I(tv)=\frac{t^2}{2}\|v\|^2-\int_{\R^N}H(x,tv)dx-\frac{t^{2^{*}}}{2^{*}}\int_{\R^N}|v|^{2^{*}}dx.
\end{equation*}

It follows from Lemma \ref{le2} $(6)$ and the Sobolev inequality that
\begin{equation}\label{eq10}
\int_{\R^N}H(x,tv)dx\leq\frac{\delta}{2}t^2\int_{\R^N}|v|^2dx+ \frac{C_\delta}{2^*}t^{2^*}\int_{\R^N}|v|^{2^*}dx\leq C_1\delta t^2\|v\|^2+C_\delta t^{2^*}\|v\|^{2^*},
\end{equation}
where the positive constant $C_1$ is independent of $t$. Thus for $\delta>0$ sufficiently small, we have
\begin{eqnarray*}
M(t)&\geq&\frac{t^2}{2}\|v\|^2-C_1\delta t^2\|v\|^2-(C_\delta+ \frac{1}{2^*})t^{2^*}\|v\|^{2^*}
\\&\geq&\frac{t^2}{2}\|v\|^2-C_2t^{2^*}\|v\|^{2^*},
\end{eqnarray*}
where the positive constant $C_2$ is independent of $t$. Since $v\neq0$ and $2^*=\frac{2N}{N-2}>2$ for $N\geq3$, it is easy to see that $M(t)>0$ whenever $t>0$ is small enough. On the other hand, by using Lemma \ref{le2} $(5)$, we have
\begin{equation*}
M(t)\leq\frac{t^2}{2}\|v\|^2-\frac{t^{2^*}}{2^*}\int_{\R^N}|v|^{2^*}dx,
\end{equation*}
then we can easily show that $M(t)\rightarrow-\infty$ as $t\rightarrow+\infty$. So there exist $t_v=t(v)>0$ such that $M(t_v)=\max\limits_{t>0}M(t)$ and $M'(t_v)=0$, i.e., $I(t_vv)=\max\limits_{t>0}I(tv)$ and then $t_vv\in\N$.

Suppose that there exist $t_1>t_2>0$ such that $t_1v\in\N$ and $t_2v\in\N$. Then by Lemma \ref{le2} $(3)$,  one has
\begin{eqnarray*}
\|v\|^2&=&\int_{\R^N}\frac{h(x,t_1v^+)}{t_1v^+}(v^+)^2dx+t_1^{2^*-2}\int_{\R^N}|v|^{2^*}dx
\\&>&\int_{\R^N}\frac{h(x,t_2v^+)}{t_2v^+}(v^+)^2dx+t_2^{2^*-2}\int_{\R^N}|v|^{2^*}dx
\\&=&\|v\|^2,
\end{eqnarray*}
which is a contradiction. Hence $t_v$ is unique and this completes the proof.
\end{proof}
\begin{lemma}\label{le4}
$(1)$ There exist $\rho>0$ such that level value $c=\inf\limits_\N I\geq\inf\limits_{S_\rho}I>0$, where $S_\rho=\{v\in E: \|v\|=\rho\}$.

$(2)$ $\|v\|^2\geq2c$ for all $v\in \N$.
\end{lemma}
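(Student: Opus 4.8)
The plan is to establish both claims by exploiting the structure of $I$ on the Nehari manifold together with the growth estimates from Lemma \ref{le2}. For part $(1)$, I would first show that $I$ is bounded below away from zero on small spheres $S_\rho$. Starting from the identity \eqref{eq9}, $I(v)=\frac{1}{2}\|v\|^2-\int_{\R^N}H(x,v)dx-\frac{1}{2^*}\int_{\R^N}|v|^{2^*}dx$, I apply the second estimate of Lemma \ref{le2} $(6)$ to bound $\int_{\R^N}H(x,v)dx\leq\frac{\delta}{2}\|v\|_2^2+\frac{C_\delta}{2^*}\|v\|_{2^*}^{2^*}$, and then use the continuous embeddings $E\hookrightarrow L^2(\R^N)$ and $E\hookrightarrow L^{2^*}(\R^N)$ to get $I(v)\geq\frac{1}{2}\|v\|^2-C_1\delta\|v\|^2-C_3\|v\|^{2^*}$ for constants independent of $v$. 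Fixing $\delta$ small enough that $C_1\delta<\frac14$ yields $I(v)\geq\frac14\|v\|^2-C_3\|v\|^{2^*}$, and since $2^*>2$ for $N\geq3$, choosing $\rho>0$ small makes the right-hand side strictly positive, so $\inf_{S_\rho}I>0$.

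Next I must show $c=\inf_\N I\geq\inf_{S_\rho}I$. Given any $v\in\N$, Lemma \ref{le3} guarantees that $t\mapsto I(tv)$ attains its maximum over $t>0$ precisely at $t=1$ (since $v\in\N$ means $1$ is a critical point of $M(t)=I(tv)$, and uniqueness from Lemma \ref{le3} forces it to be the maximizer). Now choose $t_0>0$ so that $\|t_0 v\|=\rho$, i.e. $t_0=\rho/\|v\|$; then
\begin{equation*}
I(v)=\max_{t>0}I(tv)\geq I(t_0 v)\geq\inf_{S_\rho}I.
\end{equation*}
Taking the infimum over $v\in\N$ gives $c\geq\inf_{S_\rho}I>0$, which is exactly part $(1)$.

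For part $(2)$, let $v\in\N$. By definition $\langle I'(v),v\rangle=0$, which written out gives $\|v\|^2=\int_{\R^N}h(x,v)v\,dx+\int_{\R^N}|v|^{2^*}dx$. Then from \eqref{eq9},
\begin{equation*}
I(v)=\frac{1}{2}\|v\|^2-\int_{\R^N}H(x,v)dx-\frac{1}{2^*}\int_{\R^N}|v|^{2^*}dx,
\end{equation*}
and substituting the Nehari identity together with the inequality $H(x,s)\leq\frac{1}{2}h(x,s)s$ from Lemma \ref{le2} $(5)$ and $\frac{1}{2^*}<\frac12$ gives
\begin{equation*}
I(v)=\frac12\int_{\R^N}h(x,v)v\,dx+\frac12\int_{\R^N}|v|^{2^*}dx-\int_{\R^N}H(x,v)dx-\frac{1}{2^*}\int_{\R^N}|v|^{2^*}dx\geq\Big(\frac12-\frac{1}{2^*}\Big)\int_{\R^N}|v|^{2^*}dx\geq0,
\end{equation*}
but more usefully I would instead write $\frac12\|v\|^2=I(v)+\int_{\R^N}H(x,v)dx+\frac{1}{2^*}\int_{\R^N}|v|^{2^*}dx\geq I(v)\geq c$ using only $H\geq0$ (Lemma \ref{le2} $(5)$) and $|v|^{2^*}\geq0$. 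Hence $\|v\|^2\geq 2c$ for all $v\in\N$.

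The one genuine subtlety — the step I expect to require the most care — is the passage $c\geq\inf_{S_\rho}I$ in part $(1)$: it hinges crucially on Lemma \ref{le3}, specifically on knowing that for $v\in\N$ the value $I(v)$ equals $\max_{t>0}I(tv)$, so that scaling $v$ down to the sphere $S_\rho$ cannot increase the functional. Everything else is a routine combination of the Sobolev embedding $E\hookrightarrow L^\gamma(\R^N)$ ($\gamma\in[2,2^*]$), the equivalence of $\|\cdot\|$ with $\|\cdot\|_{H^1}$ on $E$, and the elementary inequalities collected in Lemma \ref{le2} $(5)$–$(6)$.
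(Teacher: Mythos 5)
Your proof is correct and follows essentially the same route as the paper: the small-sphere positivity estimate comes from Lemma \ref{le2}$(6)$ with the Sobolev embedding, the inequality $c\geq\inf_{S_\rho}I$ comes from the maximality property $I(v)=\max_{t>0}I(tv)$ for $v\in\N$ given by Lemma \ref{le3}, and part $(2)$ is exactly the paper's one-line observation that $I(v)\leq\frac12\|v\|^2$ since $H\geq0$. The only cosmetic difference is that you briefly detour in part $(2)$ through the Nehari identity and Lemma \ref{le2}$(5)$ before discarding it in favour of the simpler bound the paper uses.
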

\begin{proof}
$(1)$ By \eqref{eq10} we have
\begin{equation*}
\int_{\R^N}H(x,v)dx\leq\frac{\delta}{2}\int_{\R^N}|v|^2dx+\frac{C_\delta}{2^*}\int_{\R^N}|v|^{2^*}dx\leq C_1\delta\|v\|^2+C_\delta\|v\|^{2^*},
\end{equation*}
therefore from \eqref{eq9} we choose $\delta>0$ sufficiently small, then
\begin{eqnarray*}
I(v)&\geq&\frac{1}{2}\|v\|^2-C_1\delta\|v\|^2-C_\delta\|v\|^{2^*}-\frac{1}{2^*}\|v\|^{2^*}
\\&\geq&\frac{1}{2}\|v\|^2-C_2\|v\|^{2^*},
\end{eqnarray*}
where positive constant $C_1$ and $C_2$ are as in Lemma \ref{le3}. Hence $\inf\limits_{S_\rho}I>0$ is obtained when $\rho$ is small enough. The inequality $\inf\limits_\N I\geq\inf\limits_{S_\rho}I$ is a consequence of Lemma \ref{le3} since for every $v\in\N$ there is $e>0$ such that $ev\in S_\rho$ and $I(t_vv)\geq I(ev)$.

$(2)$ For $v\in\N$, by \eqref{eq9} we have
\begin{eqnarray*}
c&\leq&I(v)
\\&=&\frac{1}{2}\int_{\R^N}\left(|\nabla v|^2+V(x)v^2\right)dx-\int_{\R^N}H(x,v)dx-\frac{1}{2^{*}}\int_{\R^N}|v|^{2^{*}}dx
\\&\leq&\frac{1}{2}\int_{\R^N}\left(|\nabla v|^2+V(x)v^2\right)dx
\\&=&\frac{1}{2}\|v\|^2.
\end{eqnarray*}
Hence $\|v\|^2\geq2c$ for all $v\in \N$, then the proof is completed.
\end{proof}

\begin{lemma}\label{le5}
If $\V$ is a compact subset of $E\setminus\{0\}$, then $m$ maps $\V$ into bounded set in $E$.
\end{lemma}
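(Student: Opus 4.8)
The plan is to unwind what the map $m$ actually does. Although it is not spelled out in the excerpt, the Szulkin--Weth framework (which the authors invoke via \cite{AS,monm}) defines $m\colon E\setminus\{0\}\to\N$ by $m(v)=t_v v$, where $t_v=t(v)>0$ is the unique positive number furnished by Lemma \ref{le3}. So the claim reduces to showing that the scalars $\{t_v : v\in\V\}$ stay bounded above, since then $\{m(v)=t_v v : v\in\V\}$ is the continuous image of the bounded set $\V$ scaled by bounded factors, hence bounded in $E$. (One may assume $\V\subset S_1$ after normalizing, since $m(v)=m(v/\|v\|)$ up to the scalar $t$, or simply use that $\V$ compact in $E\setminus\{0\}$ means $\|v\|$ is bounded on $\V$.)

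First I would argue by contradiction: suppose there is a sequence $(v_n)\subset\V$ with $t_n:=t_{v_n}\to+\infty$. By compactness of $\V$ in $E\setminus\{0\}$, after passing to a subsequence $v_n\to v$ in $E$ with $v\ne 0$ (so $\|v\|>0$ and, by Sobolev embedding, $v_n\to v$ in $L^{2^*}(\R^N)$, so $\int_{\R^N}|v_n|^{2^*}dx\to\int_{\R^N}|v|^{2^*}dx>0$). Now $t_n v_n\in\N$ means $\langle I'(t_n v_n),t_n v_n\rangle=0$, i.e.
\begin{equation*}
\|v_n\|^2=\int_{\R^N}\frac{h(x,t_n v_n^+)}{t_n v_n^+}(v_n^+)^2\,dx+t_n^{2^*-2}\int_{\R^N}|v_n|^{2^*}\,dx.
\end{equation*}
Both terms on the right are nonnegative by Lemma \ref{le2} $(5)$--$(6)$, and the left side $\|v_n\|^2$ is bounded (it converges to $\|v\|^2$). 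But the last term is at least $t_n^{2^*-2}\int_{\R^N}|v_n|^{2^*}dx$, and since $\int_{\R^N}|v_n|^{2^*}dx$ is bounded away from $0$ for large $n$ while $t_n^{2^*-2}\to+\infty$ (as $2^*>2$), this term blows up — contradicting boundedness of $\|v_n\|^2$. Hence $\{t_v:v\in\V\}$ is bounded above.

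For the conclusion, write $t_* := \sup_{v\in\V} t_v < \infty$ and $R_* := \sup_{v\in\V}\|v\| < \infty$ (finite since $\V$ is compact in $E$). Then for every $v\in\V$, $\|m(v)\| = t_v\|v\| \le t_* R_*$, so $m(\V)$ is bounded in $E$, as claimed. I expect the only real content to be the contradiction step above; the potential obstacle is purely expository, namely that the map $m$ and its basic properties (well-definedness via Lemma \ref{le3}, continuity) are assumed from the Szulkin--Weth machinery rather than established in the excerpt, so the argument leans on $m(v)=t_v v$ being the correct definition. Everything else is a direct consequence of the Nehari identity and the sign/growth estimates in Lemma \ref{le2}.
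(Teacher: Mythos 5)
Your proof is correct, but it takes a genuinely different route from the paper's. The paper normalizes $\V\subset S_1$, supposes $\|v_n\|=t_{u_n}\to\infty$, and then invokes the superlinear growth of $H$ (Lemma \ref{le2} $(4)$, ultimately hypothesis $(f_4)$) together with Fatou's lemma to force $\int_{\R^N}H(x,v_n)/t_{u_n}^2\,dx\to\infty$, contradicting $0\le I(v_n)/t_{u_n}^2\le\tfrac12-\int_{\R^N}H(x,v_n)/t_{u_n}^2\,dx$. You instead read the contradiction off the Nehari identity, using only that the critical term $t_n^{2^*-2}\int_{\R^N}|v_n|^{2^*}\,dx$ blows up (since $2^*>2$ and $\int_{\R^N}|v_n|^{2^*}\,dx\to\int_{\R^N}|v|^{2^*}\,dx>0$ by the continuous embedding $E\hookrightarrow L^{2^*}$ and compactness of $\V$). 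This is shorter and entirely avoids $(f_4)$ and Fatou, but it works only because the model has an explicit critical power; the paper's version is the generic Szulkin--Weth argument and would survive in a purely subcritical setting where there is no $|v|^{2^*}$ term to lean on. Both arguments correctly use that $\V$, being compact in $E\setminus\{0\}$, is closed, so the subsequential limit $v$ is nonzero. One small point worth flagging: the nonnegativity of $\int_{\R^N}h(x,t_nv_n)v_n^2/(t_nv_n)\,dx$ follows from Lemma \ref{le2} $(6)$ (namely $h(x,s)\ge0$), as you say, so the inequality you discard that term from is legitimate.
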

\begin{proof}
We may assume without loss of generality that $\V\subset S_1$, where $S_1$ is the unit sphere in $E$. Arguing by contradiction, suppose that there exist $u_n\in\V$ and $v_n=t_{u_n}u_n$ such that $\|v_n\|\rightarrow\infty$ as $n\rightarrow\infty$. Passing to a subsequence, there is $u\in E$ with $\|u\|=1$ such that $u_n\rightarrow u\in S_1$. Since $|v_n(x)|\rightarrow\infty$ if $u(x)\neq0$, then by Lemma \ref{le2} $(4)$ and Fatou's lemma that
\begin{equation*}
\int_{\R^N}\frac{H(x,v_n)}{t_{u_n}^2}=\int_{\R^N}\frac{H(x,v_n)}{v_n^2}u_n^2\rightarrow\infty.
\end{equation*}
By \eqref{eq9}, we have
\begin{equation*}
0\leq\frac{I(v_n)}{t_{u_n}^2}\leq\frac{1}{2}-\int_{\R^N}\frac{H(x,v_n)}{t_{u_n}^2}\rightarrow-\infty,
\end{equation*}
which is a contradiction. Thus the Lemma is proved.
\end{proof}

Recall that $S_1$ is the unit sphere in $E$ and we define the mapping $m: S_1\rightarrow\N$ by setting
\begin{equation*}
m(w):=t_ww,
\end{equation*}
where $t_w$ is as in Lemma \ref{le3}. Noting that $\|m(w)\|=t_w$. Lemma \ref{le6} and Lemma \ref{le7} below are taken directly from \cite{monm} (see Proposition 8 and Corollary 10 there). The hypotheses in \cite{monm} are satisfied by Lemma \ref{le3}, \ref{le4} and \ref{le5} above. Indeed, if $M(t)=I(tw)$ and $w\in S_1$, then $M'(t)>0$ for $0<t<t_w$ and $M'(t)<0$ for $t>t_w$ by Lemma \ref{le3}, $t_w\geq\zeta>0$ by Lemma \ref{le4} and $t_w\leq R$ for $w\in\V\subset S_1$ by Lemma \ref{le5}, where $\zeta$ and $R$ are positive constant.

\begin{lemma}\label{le6}
The mapping $m$ is continuous. Moreover, the mapping $m$ is a homeomorphism between $S_1$ and $\N$, and the inverse of $m$ is given by $m^{-1}(v)=\frac{v}{\|v\|}$.
\end{lemma}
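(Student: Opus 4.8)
The plan is to verify the hypotheses needed to invoke the abstract machinery of Szulkin and Weth \cite{monm}; once these are in place the statement is a direct citation, but it is worthwhile to spell out why each piece holds in our setting. The key structural facts are: (i) for each $w\in S_1$ the fibering map $t\mapsto M(t)=I(tw)$ has a unique positive critical point $t_w$, which is its global maximum, with $M'(t)>0$ on $(0,t_w)$ and $M'(t)<0$ on $(t_w,\infty)$ — this is exactly Lemma \ref{le3}; (ii) there is a uniform lower bound $t_w\geq\zeta>0$, which follows from Lemma \ref{le4} since $m(w)=t_ww\in\N$ forces $\|m(w)\|^2=t_w^2\geq 2c>0$; and (iii) on any compact $\V\subset S_1$ one has a uniform upper bound $t_w\leq R$, which is Lemma \ref{le5}. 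With (i)--(iii), the functional $I$ and the manifold $\N$ fit the framework of \cite{monm}, and Proposition 8 there gives precisely that $m$ is a homeomorphism from $S_1$ onto $\N$ with continuous inverse $m^{-1}(v)=v/\|v\|$.

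First I would record the formula for the inverse. For $v\in\N$, write $w=v/\|v\|\in S_1$; by uniqueness in Lemma \ref{le3} the only positive $t$ with $tw\in\N$ is $t=\|v\|$ (since $v=\|v\|\,w\in\N$ already), so $m(w)=v$, i.e. $m(v/\|v\|)=v$. Conversely $m(w)=t_ww$ has norm $t_w$, so $m(w)/\|m(w)\| = w$. Hence $m$ is a bijection between $S_1$ and $\N$ with the stated inverse, and $m^{-1}$ is manifestly continuous on $\N$ (division by the norm, which is bounded below on $\N$ by Lemma \ref{le4}$(2)$).

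Next I would argue continuity of $m$ itself. Take $w_n\to w$ in $S_1$; the set $\V=\{w_n\}\cup\{w\}$ is compact in $E\setminus\{0\}$, so by Lemma \ref{le5} the sequence $t_{w_n}=\|m(w_n)\|$ is bounded, and by (ii) it is bounded away from $0$. Passing to a subsequence, $t_{w_n}\to t_0$ for some $t_0\in[\zeta,R]$. Then $m(w_n)=t_{w_n}w_n\to t_0w$ in $E$; since $\N$ is closed in $E\setminus\{0\}$ (it is the zero set of the continuous functional $v\mapsto\langle I'(v),v\rangle$, and the limit is nonzero because $t_0\geq\zeta>0$), we get $t_0w\in\N$, whence $t_0=t_w$ by the uniqueness in Lemma \ref{le3}. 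As every subsequence has a further subsequence converging to the same limit $t_ww=m(w)$, the whole sequence satisfies $m(w_n)\to m(w)$, proving continuity. Combined with the previous paragraph, $m$ is a continuous bijection with continuous inverse, i.e. a homeomorphism.

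The main obstacle is really just the verification of hypothesis (iii), namely Lemma \ref{le5}, which has already been established above using $(f_4)$ (through Lemma \ref{le2}$(4)$) and Fatou's lemma; without the superquadratic growth of $H$ encoded in $(f_4)$ the upper bound on $t_w$ over compact sets could fail, and with it the compactness argument for continuity of $m$. Everything else is soft: the uniqueness and maximum-point structure of the fibering map from Lemma \ref{le3}, the positive uniform lower bound on $t_w$ from Lemma \ref{le4}, and the closedness of $\N$. Once these are cited, the conclusion coincides verbatim with Proposition 8 of \cite{monm}.
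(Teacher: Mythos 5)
Your proof is correct and follows the same route as the paper: you verify the three hypotheses of the Szulkin--Weth framework (the unique global maximum of the fibering map $t\mapsto I(tw)$ from Lemma \ref{le3}, the uniform lower bound $t_w\geq\zeta>0$ from Lemma \ref{le4}, and the upper bound on $t_w$ over compact subsets of $S_1$ from Lemma \ref{le5}) and then invoke Proposition 8 of \cite{monm}. The extra unpacking you provide --- the explicit check that $m^{-1}(v)=v/\|v\|$ via uniqueness, and the subsequence argument for continuity of $m$ using closedness of $\N$ in $E\setminus\{0\}$ --- is a faithful account of what the cited proposition delivers; the paper contents itself with the citation after verifying the same three hypotheses.
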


We shall consider the functional $\Psi: S_1\rightarrow\R$ given by
\begin{equation*}
\Psi(w):=I(m(w)).
\end{equation*}

\begin{lemma}\label{le7}
$(1)$ $\Psi\in C^1(S_1,\R)$ and
\begin{equation*}
\langle\Psi'(w), z\rangle=\|m(w)\|\langle I'(m(w)), z\rangle
\end{equation*}
for all $z\in T_w(S_1)=\{u\in H^1(\R^N), \langle u, w\rangle=0\}$.

$(2)$ If $\{w_n\}$ is a Palais-Smale sequence for $\Psi$, then $\{m(w_n)\}$ is a Palais-Smale sequence for $I$. If $\{v_n\}\subset\N$ is a bounded Palais-Smale sequence for $I$, then $\{m^{-1}(v_n)\}$ is a Palais-Smale sequence for $\Psi$.

$(3)$ $w$ is a critical point of $\Psi$ if and only if $m(w)$ is a nontrivial critical point of $I$. Moreover, the corresponding values of $\Psi$ and $I$ are coincide and $\inf\limits_{S_1}\Psi=\inf\limits_{\N}I$.

\end{lemma}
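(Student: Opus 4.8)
The plan is to run the abstract Nehari–manifold scheme of Szulkin and Weth \cite{monm}: all three assertions follow from the homeomorphism properties of $m$ in Lemma \ref{le6}, the maximality $I(m(w))=\max_{t>0}I(tw)$ from Lemma \ref{le3}, and the uniform bounds $\zeta\le t_w\le R$ recorded just before Lemma \ref{le6} (with $\zeta=\sqrt{2c}>0$ coming from Lemma \ref{le4}). I will not need $\N$ to be of class $C^1$ anywhere.

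For part $(1)$ I would fix $w\in S_1$ and $z\in T_w(S_1)$ and look at the $C^1$ curve $\gamma(s)=(w+sz)/\|w+sz\|$ on $S_1$, which satisfies $\gamma(0)=w$, $\gamma'(0)=z$ since $\langle z,w\rangle=0$. Writing $t_s:=t_{\gamma(s)}$, the continuity of $m$ (Lemma \ref{le6}) gives $t_s\to t_w$ as $s\to0$. The hard part — the only real obstacle in the whole lemma — is that $s\mapsto t_s$ is a priori merely continuous, not differentiable, so one cannot differentiate $\Psi(\gamma(s))=I(t_s\gamma(s))$ by the chain rule. I would get around this by a two-sided squeeze: from $I(t_s\gamma(s))=\max_t I(t\gamma(s))\ge I(t_w\gamma(s))$ and $I(t_ww)=\max_t I(tw)\ge I(t_sw)$ one obtains
$$
I(t_w\gamma(s))-I(t_ww)\ \le\ \Psi(\gamma(s))-\Psi(w)\ \le\ I(t_s\gamma(s))-I(t_sw).
$$
Applying the mean value theorem to each outer term and dividing by $s>0$ turns both ends into expressions of the form $\langle I'(\xi_s),\,\tau_s\,s^{-1}(\gamma(s)-w)\rangle$ with $\xi_s\to t_ww$ (using $I\in C^1$, $t_s\to t_w$, $\gamma(s)\to w$) and $\tau_s\to t_w$, so both limits equal $t_w\langle I'(t_ww),z\rangle$. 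Hence the one-sided directional derivatives of $\Psi$ at $w$ exist and equal $\|m(w)\|\langle I'(m(w)),z\rangle$; running the same argument with $-z$ gives Gâteaux differentiability with the stated formula, and since $w\mapsto\|m(w)\|\,I'(m(w))$ is a composition of continuous maps, $\Psi\in C^1(S_1,\R)$.

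For part $(2)$ I would use the orthogonal splitting $E=T_{w_n}(S_1)\oplus\R w_n$. If $\{w_n\}$ is Palais–Smale for $\Psi$, set $v_n=m(w_n)=t_{w_n}w_n$; then $I(v_n)=\Psi(w_n)$ is bounded, and for $\varphi\in E$ write $\varphi=z+\langle\varphi,w_n\rangle w_n$ with $z\in T_{w_n}(S_1)$, $\|z\|\le\|\varphi\|$. Since $v_n\in\N$ we have $\langle I'(v_n),w_n\rangle=t_{w_n}^{-1}\langle I'(v_n),v_n\rangle=0$, so by part $(1)$
$$
|\langle I'(v_n),\varphi\rangle|=|\langle I'(v_n),z\rangle|=\|m(w_n)\|^{-1}|\langle\Psi'(w_n),z\rangle|\le t_{w_n}^{-1}\|\Psi'(w_n)\|\,\|\varphi\|,
$$
and $t_{w_n}=\|v_n\|\ge\sqrt{2c}>0$ by Lemma \ref{le4}, so $\|I'(v_n)\|\to0$. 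Conversely, if $\{v_n\}\subset\N$ is a bounded Palais–Smale sequence for $I$ and $w_n=m^{-1}(v_n)=v_n/\|v_n\|$, then $m(w_n)=v_n$ by the uniqueness in Lemma \ref{le3}, hence $\Psi(w_n)=I(v_n)$ is bounded and $\|\Psi'(w_n)\|=\|v_n\|\sup\{|\langle I'(v_n),z\rangle|:z\in T_{w_n}(S_1),\ \|z\|=1\}\le\|v_n\|\,\|I'(v_n)\|\to0$.

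For part $(3)$, the formula of part $(1)$ shows that $w\in S_1$ is a critical point of $\Psi$ iff $\langle I'(m(w)),z\rangle=0$ for all $z\in T_w(S_1)$ (using $\|m(w)\|\ne0$); together with $\langle I'(m(w)),w\rangle=t_w^{-1}\langle I'(m(w)),m(w)\rangle=0$ this is equivalent to $I'(m(w))=0$, i.e. $m(w)$ is a critical point of $I$, nontrivial because $m(w)\ne0$. The converse is immediate since a nontrivial critical point $v$ of $I$ lies on $\N$ and hence equals $m(v/\|v\|)$. Finally $\Psi(w)=I(m(w))$ by definition, and since $m$ is a bijection from $S_1$ onto $\N$ (Lemma \ref{le6}), $\inf_{S_1}\Psi=\inf_{v\in\N}I(v)=\inf_{\N}I$, completing the proof.
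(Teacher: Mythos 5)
Your proof is correct, and it is essentially the argument underlying the result the paper cites rather than proves: the paper verifies that the hypotheses of Szulkin--Weth's Proposition~8 and Corollary~10 in \cite{monm} are met (via Lemmas \ref{le3}, \ref{le4}, \ref{le5}) and then quotes the conclusion, whereas you reconstruct that conclusion from scratch. The key technical moves are identical in spirit: the two-sided squeeze using $I(t_s\gamma(s))=\max_t I(t\gamma(s))$ to sidestep the non-differentiability of $s\mapsto t_s$, the orthogonal decomposition $E=T_{w_n}(S_1)\oplus\R w_n$ together with $\langle I'(v_n),v_n\rangle=0$ to pass from $\Psi'$ to $I'$ and back, and the lower bound $t_{w_n}=\|v_n\|\ge\sqrt{2c}$ from Lemma~\ref{le4} to control $\|I'(v_n)\|$ in terms of $\|\Psi'(w_n)\|$. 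What your version buys is self-containedness: one sees exactly where each hypothesis (Lemma~\ref{le3} for the unique maximum, Lemma~\ref{le4} for $t_w$ bounded away from zero, Lemma~\ref{le5} for $t_w$ bounded above on compacta, hence continuity of $m$) is used; what the paper buys is brevity. One minor point you could make explicit: the two MVT applications in part~(1) are to the scalar functions $\tau\mapsto I(a+\tau(b-a))$, and the passage from Gâteaux differentiability with the displayed formula to $\Psi\in C^1$ uses that the Gâteaux derivative $w\mapsto\|m(w)\|\,I'(m(w))\big|_{T_wS_1}$ is continuous, which gives Fréchet differentiability by the standard criterion. But these are cosmetic; the argument is sound.
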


\begin{remark}
\rm{Lemma \ref{le7} is also similar to \cite{AS} (see Proposition 2.9 and Corollary 2.10 there) except that we do not require $I$ is coercive on $\N$, but we obtain the following lemma.}
\end{remark}

\begin{lemma}\label{le8}
Each Palais-Smale sequence for $I$ is bounded in $E$.
\end{lemma}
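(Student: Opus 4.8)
The plan is to argue directly, without passing to the Nehari manifold or rescaling. Let $\{v_n\}\subset E$ be a Palais--Smale sequence, so that $I(v_n)\to c$ for some $c\in\R$ and $\|I'(v_n)\|_{E^*}\to 0$; in particular $|I(v_n)|\le |c|+1$ and $|\langle I'(v_n),v_n\rangle|\le\|v_n\|$ for all large $n$. Since we do not assume the Ambrosetti--Rabinowitz condition, the quantity $I(v_n)-\tfrac12\langle I'(v_n),v_n\rangle$ carries no coercive information about $H$, so it cannot control $\|v_n\|^2$ in one shot. Instead I will first use it to control only the critical term, and then feed that back into the energy identity, exploiting the smallness of the quadratic coefficient in Lemma~\ref{le2}(6).

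\emph{Step 1: control of $\int_{\R^N}|v_n|^{2^*}dx$.} Using \eqref{eq9} together with the expression for $\langle I'(v_n),v_n\rangle$,
\begin{equation*}
I(v_n)-\tfrac12\langle I'(v_n),v_n\rangle=\int_{\R^N}\Big(\tfrac12 h(x,v_n)v_n-H(x,v_n)\Big)dx+\Big(\tfrac12-\tfrac1{2^*}\Big)\int_{\R^N}|v_n|^{2^*}dx .
\end{equation*}
By Lemma~\ref{le2}(5) the first integrand is nonnegative, while the left-hand side is bounded by $|c|+1+\tfrac12\|v_n\|$ for $n$ large. Since $\tfrac12-\tfrac1{2^*}>0$ for $N\ge 3$, this yields a constant $C>0$, independent of $n$, with $\int_{\R^N}|v_n|^{2^*}dx\le C\big(1+\|v_n\|\big)$ for all large $n$.

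\emph{Step 2: closing the estimate.} From \eqref{eq9},
\begin{equation*}
\tfrac12\|v_n\|^2=I(v_n)+\int_{\R^N}H(x,v_n)dx+\tfrac1{2^*}\int_{\R^N}|v_n|^{2^*}dx .
\end{equation*}
For the middle term I apply the second inequality of Lemma~\ref{le2}(6): for any $\delta>0$ there is $C_\delta>0$ with
\begin{equation*}
\int_{\R^N}H(x,v_n)dx\le\frac{\delta}{2}\int_{\R^N}|v_n|^2dx+\frac{C_\delta}{2^*}\int_{\R^N}|v_n|^{2^*}dx\le\frac{\delta}{2d_1}\|v_n\|^2+\frac{C_\delta}{2^*}\,C\big(1+\|v_n\|\big),
\end{equation*}
where I used $\|v_n\|_2^2\le d_1^{-1}\|v_n\|^2$ (from the norm equivalence recalled in Section~2) and Step~1. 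Combining the last three displays and choosing $\delta=d_1/2$ so that $\tfrac{\delta}{2d_1}=\tfrac14$, the term $\tfrac14\|v_n\|^2$ can be absorbed into the left-hand side, leaving $\tfrac14\|v_n\|^2\le a+b\|v_n\|$ for constants $a,b>0$ independent of $n$. Hence $\{\|v_n\|\}$ is bounded, which proves the lemma.

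\emph{Main obstacle.} The delicate point is exactly the absence of an Ambrosetti--Rabinowitz-type inequality, which prevents the usual one-step boundedness argument. The two-step scheme above circumvents this by separating the critical contribution --- which, thanks to Lemma~\ref{le2}(5), is controlled at most linearly in $\|v_n\|$ --- from the lower-order contribution governed by Lemma~\ref{le2}(6), whose quadratic part comes with an arbitrarily small constant and can therefore be absorbed.
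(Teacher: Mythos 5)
Your proof is correct and follows essentially the same two-step strategy as the paper: first use $I(v_n)-\tfrac12\langle I'(v_n),v_n\rangle$ together with the sign condition Lemma~\ref{le2}(5) to bound $\int|v_n|^{2^*}dx$ (at most linearly in $\|v_n\|$), then feed this into an identity isolating $\|v_n\|^2$ and absorb the small-$\delta$ quadratic contribution from Lemma~\ref{le2}(6). The only cosmetic difference is that in Step 2 you isolate $\tfrac12\|v_n\|^2$ from the energy $I(v_n)$ and bound $\int H(x,v_n)dx$, whereas the paper isolates $\|v_n\|^2$ from $\langle I'(v_n),v_n\rangle$ and bounds $\int h(x,v_n)v_n\,dx$; these are equivalent. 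Your version is in fact a bit more careful than the paper's in explicitly tracking the $O(\|v_n\|)$ contribution coming from $\langle I'(v_n),v_n\rangle$ rather than lumping it into $o(1)$.
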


\begin{proof}
Let $\{v_n\}\subset E$ be a Palais-Smale sequence for $I$ at level $c\in\R$, i.e.,
\begin{equation*}
I(v_n)\rightarrow c\quad\mbox{and}\quad I'(v_n)\rightarrow 0.
\end{equation*}
Combining with Lemma \ref{le2} $(5)$, when n is large enough we have
\begin{eqnarray*}
c+o(1)&=&I(v_n)-\frac{1}{2}\langle I'(v_n), v_n\rangle
\\&=&\frac{1}{2}\int_{\R^N}h(x,v_n)v_ndx-\int_{\R^N}H(x,v_n)dx+ (\frac{1}{2}-\frac{1}{2^*})\int_{\R^N}|v_n|^{2^*}dx
\\&\geq&\frac{1}{N}\int_{\R^N}|v_n|^{2^*}dx.
\end{eqnarray*}
Hence by using Lemma \ref{le2} $(6)$, when n is large enough, one has
\begin{eqnarray*}
\|v_n\|^2&=&\langle I'(v_n), v_n\rangle+\int_{\R^N}h(x,v_n)v_ndx+\int_{\R^N}|v_n|^{2^*}dx
\\&\leq&\delta\int_{\R^N}|v_n|^2dx+C_\delta\int_{\R^N}|v_n|^{2^*}dx+\int_{\R^N}|v_n|^{2^*}dx
\\&\leq&C_3\delta\|v_n\|^2+C_\delta^{'} ,
\end{eqnarray*}
where $C_3$ and $C_\delta^{'}$ are positive constant. Fix $\delta=\frac{1}{2C_3}$, we obtain $\{v_n\}$ is bounded in $E$. Then the proof is completed.
\end{proof}

\section{Estimates}

In this section we will verify that the level value c is in an interval where the bounded Palais-Smale sequence cannot be vanishing. To show this result, we use appropriate test functions as the ones employed by Br\'ezis and Nirenberg in \cite{bn}.

We first recall that the best constant for Sobolev embedding $D^{1,2}(\R^N): =\{u\in L^{2^*}(\R^N): |\nabla u|\in L^2(\R^N)\}\subset L^{2^*}(\R^N)$ is given by
\begin{equation}\label{eqs}
S=\inf_{u\in D^{1,2}(\R^N)\setminus\{0\}}\frac{\int_{\R^N}|\nabla u|^2}{(\int_{\R^N}|u|^{2^*})^{2/2^*}} .
\end{equation}

Given $\varepsilon>0$, we consider the function $\omega_\varepsilon: \R^N\rightarrow\R$ defined by
\begin{equation*}
\omega_\varepsilon(x)=\frac{(N(N-2)\varepsilon)^{\frac{N-2}{4}}}{(\varepsilon+|x|^2)^{\frac{N-2}{2}}} .
\end{equation*}
We introduce a well-known fact that $\{\omega_\varepsilon\}_{\varepsilon>0}$ is a family of functions on which the infimum, that defines the best constant S, is attained. Let $\phi\in C_0^\infty(\R^N,[0,1])$ be a cut-off function satisfies, $\phi\equiv1$ for $x\in B_{\frac{\varrho}{2}}(0)$ and $\phi\equiv0$ for $x\in \R^N\setminus B_{\varrho}(0)$, where $B_{\varrho}(y):=\{x\in \R^N: |x-y|<\varrho\}\subset\Omega$. Define the test function by
\begin{equation*}
v_\varepsilon=\frac{u_\varepsilon}{\|u_\varepsilon\|_{2^*}},\quad\mbox{where}\ u_\varepsilon=\phi\omega_\varepsilon.
\end{equation*}

Then we can get the following estimations. Since their proofs are standard, we just state them here.
\begin{lemma}\label{le9}
There exists positive constant $K$ such that $v_\varepsilon(x)$ and $u_\varepsilon(x)$ satisfy the following estimations as $\varepsilon\rightarrow0$.
\begin{equation}\label{eq3.1}
\|v_\varepsilon\|_{2^*}=1,\quad\int_{\R^N}|\nabla v_\varepsilon|^2dx=S+O(\varepsilon^{\frac{N-2}{2}}),
\end{equation}
\begin{equation}\label{eq3.2}
\int_{\R^N}|v_\varepsilon|^2dx=\left\{\begin{matrix}O(\varepsilon^{\frac{1}{2}})&N=3, \\O(\varepsilon|\ln\varepsilon|)&N=4,\\O(\varepsilon)&N\geq5,\end{matrix}\right.
\end{equation}
and
\begin{equation}\label{eq3.3}
\int_{\R^N}|u_\varepsilon|^{2^*}dx=K+O(\varepsilon^{\frac{N}{2}}).
\end{equation}
\end{lemma}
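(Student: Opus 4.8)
The statement to prove (Lemma~\ref{le9}) collects three asymptotic estimates for the truncated Talenti bubbles $u_\varepsilon = \phi\omega_\varepsilon$ and their normalizations $v_\varepsilon = u_\varepsilon/\|u_\varepsilon\|_{2^*}$, as $\varepsilon \to 0$. These are the classical Brézis--Nirenberg computations, so my plan is to reproduce them cleanly rather than invent anything new.

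\medskip

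The first step is the gradient estimate. I would write $\int_{\R^N}|\nabla u_\varepsilon|^2\,dx = \int_{\R^N}|\nabla \omega_\varepsilon|^2\,dx + (\text{error from the cut-off})$. Since $\omega_\varepsilon$ is an exact extremal for $S$, one has $\int_{\R^N}|\nabla \omega_\varepsilon|^2 = \int_{\R^N}|\omega_\varepsilon|^{2^*} = S^{N/2}$, independently of $\varepsilon$. The cut-off $\phi$ only modifies things on the annulus $B_\varrho(0)\setminus B_{\varrho/2}(0)$, where $\omega_\varepsilon$ and $\nabla\omega_\varepsilon$ are of order $\varepsilon^{(N-2)/4}$ and $\varepsilon^{(N-2)/4}$ respectively (uniformly, since $|x|$ is bounded below there), so that the correction is $O(\varepsilon^{(N-2)/2})$. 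Similarly $\int_{\R^N}|u_\varepsilon|^{2^*} = \int_{\R^N}|\omega_\varepsilon|^{2^*} + O(\varepsilon^{N/2}) = S^{N/2} + O(\varepsilon^{N/2})$, which gives \eqref{eq3.3} with $K = S^{N/2}$ (up to the normalization constant in $\omega_\varepsilon$; the point is only that $K$ is a positive constant). Dividing, $\|v_\varepsilon\|_{2^*}=1$ is immediate, and
\[
\int_{\R^N}|\nabla v_\varepsilon|^2\,dx = \frac{\int|\nabla u_\varepsilon|^2}{(\int|u_\varepsilon|^{2^*})^{2/2^*}} = \frac{S^{N/2}+O(\varepsilon^{(N-2)/2})}{(S^{N/2}+O(\varepsilon^{N/2}))^{2/2^*}} = S + O(\varepsilon^{(N-2)/2}),
\]
using a first-order Taylor expansion of $(1+t)^{-2/2^*}$ and the fact that $(N-2)/2 \le N/2$.

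\medskip

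The second step, and the one requiring the most care, is the $L^2$ estimate \eqref{eq3.2}, because this is exactly where the dimensions $N=3,4,\ge 5$ genuinely differ. Here I would bound $\int_{\R^N}|u_\varepsilon|^2\,dx \le \int_{B_\varrho(0)}\omega_\varepsilon^2\,dx$ and estimate this by passing to polar coordinates:
\[
\int_{B_\varrho(0)}\omega_\varepsilon^2\,dx = C\varepsilon^{(N-2)/2}\int_0^\varrho \frac{r^{N-1}}{(\varepsilon+r^2)^{N-2}}\,dr,
\]
and then substitute $r = \sqrt{\varepsilon}\,\rho$ to get $C\varepsilon^{(N-2)/2}\cdot\varepsilon^{(N/2)-(N-2)}\int_0^{\varrho/\sqrt\varepsilon}\frac{\rho^{N-1}}{(1+\rho^2)^{N-2}}\,d\rho = C\varepsilon\int_0^{\varrho/\sqrt\varepsilon}\frac{\rho^{N-1}}{(1+\rho^2)^{N-2}}\,d\rho$. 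The integrand decays like $\rho^{N-1-2(N-2)} = \rho^{3-N}$ at infinity, so the tail integral converges when $N \ge 5$ (giving $O(\varepsilon)$), diverges logarithmically when $N=4$ (giving $O(\varepsilon|\ln\varepsilon|)$), and diverges like $(\varrho/\sqrt\varepsilon)^{4-N}=(\varrho/\sqrt\varepsilon)^{1}$ when $N=3$ (giving $O(\varepsilon)\cdot O(\varepsilon^{-1/2}) = O(\varepsilon^{1/2})$). Since $\|u_\varepsilon\|_{2^*}\to K^{1/2^*}>0$, the same rates carry over to $v_\varepsilon$, which is \eqref{eq3.2}. This case-splitting of the radial integral is the crux of the whole lemma, and it is precisely what forces the case-distinction in hypothesis $(f_4)$ and in Lemma~\ref{le2}$(4)$; everything else is bookkeeping with the cut-off. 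I would organize the write-up so that the three estimates are proved in the order: gradient and $L^{2^*}$ first (dimension-independent), then the $L^2$ estimate with the explicit substitution, and finally assemble the normalized quantities for $v_\varepsilon$.
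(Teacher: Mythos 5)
Your proposal is correct, and it reproduces the classical Brézis--Nirenberg computations from \cite{bn} in essentially the standard way. Since the paper explicitly omits the proof of Lemma~\ref{le9} ("Since their proofs are standard, we just state them here"), there is no authorial argument to compare against; your write-up supplies exactly the calculation the authors are pointing to. A few small points of polish that would tighten the exposition: in the gradient estimate, the "error from the cut-off" should be understood to include both the modification on the annulus $B_\varrho\setminus B_{\varrho/2}$ (where $\nabla u_\varepsilon=\phi\nabla\omega_\varepsilon+\omega_\varepsilon\nabla\phi$ picks up extra terms, all of size $O(\varepsilon^{(N-2)/4})$) \emph{and} the removal of the tail $\int_{|x|>\varrho}|\nabla\omega_\varepsilon|^2$, which is also $O(\varepsilon^{(N-2)/2})$ since $|\nabla\omega_\varepsilon|^2\sim\varepsilon^{(N-2)/2}|x|^{2-2N}$ is integrable at infinity; the same remark applies to the $L^{2^*}$ tail. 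Your $L^2$ computation, including the rescaling $r=\sqrt{\varepsilon}\,\rho$ and the resulting power count $\varepsilon^{(N-2)/2}\cdot\varepsilon^{N/2}\cdot\varepsilon^{-(N-2)}=\varepsilon$, and the threefold dichotomy according to the integrability of $\rho^{3-N}$ at infinity, is exactly right, and you correctly flag this as the source of the case-splitting in $(f_4)$ and Lemma~\ref{le2}(4). Finally, note that for the application in Lemma~\ref{le10} one only needs the $O(\cdot)$ \emph{upper} bounds in \eqref{eq3.2}, so your inequality $\int|u_\varepsilon|^2\le\int_{B_\varrho}\omega_\varepsilon^2$ is sufficient.
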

\begin{lemma}\label{le10}
Suppose that $(g)$, $(V)$ and $(f_1)-(f_4)$ hold. Then there exists $v\in E\setminus\{0\}$ such that
\begin{equation*}
\max_{t>0}I(tv)<\frac{1}{N}S^{\frac{N}{2}}.
\end{equation*}
\end{lemma}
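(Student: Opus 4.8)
The plan is to test the functional $I$ along the family $v_\varepsilon = u_\varepsilon/\|u_\varepsilon\|_{2^*}$ from Lemma \ref{le9} and to show that the mountain-pass level $\max_{t>0} I(tv_\varepsilon)$ drops strictly below $\frac{1}{N}S^{N/2}$ for $\varepsilon$ small. The subtle point is that $I$ contains, besides the critical term $-\frac{1}{2^*}\int |v|^{2^*}$, the extra lower-order term $\int_{\R^N} H(x,v)\,dx$ coming from the change of variables, and $H(x,s)\ge 0$ by Lemma \ref{le2}(5). So along $tv_\varepsilon$ one has, using \eqref{eq9}, \eqref{eq3.1}, and writing $A_\varepsilon = \|v_\varepsilon\|^2 = S + \int V(x)v_\varepsilon^2 + O(\varepsilon^{(N-2)/2})$,
\begin{equation*}
I(tv_\varepsilon) = \frac{t^2}{2}A_\varepsilon - \frac{t^{2^*}}{2^*}\int_{\R^N}|v_\varepsilon|^{2^*}\,dx - \int_{\R^N} H(x,tv_\varepsilon)\,dx .
\end{equation*}
First I would bound from above, dropping $H\ge 0$ temporarily only after extracting enough negative contribution from it: the function $t\mapsto \frac{t^2}{2}A_\varepsilon - \frac{t^{2^*}}{2^*}$ attains its max at $t_*=A_\varepsilon^{1/(2^*-2)}$ with value $\frac{1}{N}A_\varepsilon^{N/2}$, and by Lemma \ref{le3} the maximizer $t_\varepsilon$ of the full $I(tv_\varepsilon)$ lies in a fixed compact interval $[a,b]\subset(0,\infty)$ independent of small $\varepsilon$ (it cannot go to $0$ by Lemma \ref{le4}, nor to $\infty$ by the coercivity-type estimate in Lemma \ref{le3}). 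Hence
\begin{equation*}
\max_{t>0} I(tv_\varepsilon) \le \frac{1}{N}A_\varepsilon^{N/2} - \inf_{t\in[a,b]}\int_{\R^N} H(x,tv_\varepsilon)\,dx \le \frac{1}{N}S^{N/2} + C\int_{\R^N} V(x)v_\varepsilon^2\,dx + O(\varepsilon^{\frac{N-2}{2}}) - \inf_{t\in[a,b]}\int_{\R^N} H(x,tv_\varepsilon)\,dx,
\end{equation*}
where I expanded $A_\varepsilon^{N/2}$ and used $\int V v_\varepsilon^2 = O(\varepsilon^{1/2})$, $O(\varepsilon|\ln\varepsilon|)$, $O(\varepsilon)$ for $N=3,4,\ge5$ from \eqref{eq3.2}. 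So it suffices to show the $H$-integral beats these positive error terms.

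The core of the argument is therefore the lower bound on $\int_{\R^N} H(x,tv_\varepsilon)\,dx$, and this is where assumption $(f_4)$ — transcribed into Lemma \ref{le2}(4) — enters and why the three dimension ranges are treated separately. Since $\mathrm{supp}\,\phi \subset B_\varrho(0)\subset\Omega$, on this ball $v_\varepsilon(x)$ is comparable to $\varepsilon^{-(N-2)/4}(\varepsilon+|x|^2)^{-(N-2)/2}$ up to the normalizing constant, so $tv_\varepsilon\to\infty$ on shrinking balls and Lemma \ref{le2}(4) gives, for any $M>0$ and $\varepsilon$ small, $H(x,tv_\varepsilon)\ge M\,(tv_\varepsilon)^4$ when $N=3$, $\ge M\,(tv_\varepsilon)^2\ln(tv_\varepsilon)$ when $N=4$, $\ge M\,(tv_\varepsilon)^2$ when $N\ge5$, on a suitable region. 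Integrating the corresponding power of $\omega_\varepsilon$ over $B_{\varrho/2}(0)$ (where $\phi\equiv1$) gives the classical asymptotics: $\int v_\varepsilon^4 \sim c\,\varepsilon^{1/2}$ for $N=3$ (in fact $\int_{B}\omega_\varepsilon^4$ is of order $\varepsilon^{(N-2)/2}\cdot(\text{scale})$, which for $N=3$ is precisely order $\varepsilon^{1/2}$), $\int v_\varepsilon^2 \ln v_\varepsilon \sim c\,\varepsilon|\ln\varepsilon|$ for $N=4$, and $\int v_\varepsilon^2 \sim c\,\varepsilon$ for $N\ge5$. In each case the main term of $\int H(x,tv_\varepsilon)$ has the same order as the positive error $C\int V v_\varepsilon^2 + O(\varepsilon^{(N-2)/2})$ but with an arbitrarily large constant $M$ in front (uniformly for $t$ in the fixed compact interval $[a,b]$, since $t^4, t^2\ln t, t^2$ are bounded below by positive constants there). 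Choosing $M$ large enough makes
\begin{equation*}
\inf_{t\in[a,b]}\int_{\R^N} H(x,tv_\varepsilon)\,dx - C\int_{\R^N} V(x)v_\varepsilon^2\,dx - O(\varepsilon^{\frac{N-2}{2}}) > 0
\end{equation*}
for all sufficiently small $\varepsilon$, which yields $\max_{t>0} I(tv_\varepsilon) < \frac{1}{N}S^{N/2}$ and finishes the proof with $v = v_\varepsilon$.

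The main obstacle — and the step deserving the most care — is making the comparison of orders honest: one must verify that the fixed compact interval $[a,b]$ containing $t_\varepsilon$ really is $\varepsilon$-independent (this is exactly what Lemmas \ref{le3}–\ref{le5} provide, but it must be invoked carefully so that the constant $M$ can be chosen after fixing $[a,b]$ and before sending $\varepsilon\to0$), and that for $N=4$ the logarithmic factor $\ln(tv_\varepsilon)$ genuinely produces the $\varepsilon|\ln\varepsilon|$ rate rather than just $\varepsilon$ — this is the borderline case of the Brézis–Nirenberg analysis and is the reason $(f_4)$ is stated with $\ln G(t)$ precisely in dimension four. For $N\ge5$ the argument is cleanest because $(f_4)$ is assumed uniformly on all of $\R^N$ and the $\varepsilon$ rates match term by term; for $N=3$ one should be slightly careful that $\int_{B_\varrho}\omega_\varepsilon^4$ converges (which needs $4\cdot\frac{N-2}{2} = 2 < N = 3$, true) so that no cutoff error of larger order intrudes. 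Once these order computations are pinned down, strict inequality is immediate by taking $M$ large.
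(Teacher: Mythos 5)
Your proposal follows essentially the same route as the paper: test with the truncated Aubin--Talenti family $v_\varepsilon=u_\varepsilon/\|u_\varepsilon\|_{2^*}$, use $H\ge 0$ to split off $\frac{1}{N}S^{N/2}+O(\varepsilon^{(N-2)/2})$ plus the $\int V v_\varepsilon^2$ error term, confine the maximizer $t_\varepsilon$ to a fixed compact interval $[T_1,T_2]$, and then beat the error by a lower bound on $\int H(x,t_\varepsilon v_\varepsilon)\,dx$ obtained from $(f_4)$ (via Lemma~\ref{le2}(4)) and the pointwise estimate $t_\varepsilon v_\varepsilon\gtrsim\varepsilon^{-(N-2)/4}$ on $B_{\varepsilon^{1/2}}(0)$, matching the rate $\eta(\varepsilon)$ in each dimension with an arbitrarily large constant. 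The only cosmetic differences are that the paper maximizes $\Phi(s)=\frac{s^2}{2}\int|\nabla v_\varepsilon|^2-\frac{s^{2^*}}{2^*}$ and treats $\int V v_\varepsilon^2$ separately rather than lumping it into $A_\varepsilon$, and that it integrates the $H$ lower bound explicitly over $|x|<\varepsilon^{1/2}$ (which you should pin down as the ``suitable region'' — the bound $H(x,s)\ge As^q$ holds only for $s$ large, hence only near the concentration point).
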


\begin{proof}
For $t>0$, we define
\begin{equation*}
N(t)=\frac{t^2}{2}\int_{\R^N}|\nabla v_\varepsilon|^2dx +\frac{t^2}{2}\int_{\R^N}V(x)v_\varepsilon^2dx -\int_{\R^N}H(x,tv_\varepsilon)dx-\frac{t^{2^*}}{2^{*}}\int_{\R^N}|v_\varepsilon|^{2^{*}}dx.
\end{equation*}
Lemma \ref{le3} implies that there exist a unique $t_\varepsilon>0$ such that $N(t_\varepsilon)=\max\limits_{t>0}N(t)$ and $N'(t_\varepsilon)=0$. We claim first that there exist positive constants $T_1$ and $T_2$ such that
\begin{equation*}
T_1\leq t_\varepsilon\leq T_2\quad\mbox{for all}\ \varepsilon>0\ \mbox{small enough}.
\end{equation*}
Indeed, if $t_\varepsilon\rightarrow0$ as $\varepsilon\rightarrow0$, one has $N(t_\varepsilon)\rightarrow0$, which is a contradiction. And if $t_\varepsilon\rightarrow+\infty$ as $\varepsilon\rightarrow0$, one has $N(t_\varepsilon)\rightarrow-\infty$, which is also a contradiction. Thus the claim holds.

By \eqref{eq3.1}, we can define for $s>0$,
\begin{equation*}
\Phi(s):=\frac{s^2}{2}\int_{\R^N}|\nabla v_\varepsilon|^2dx-\frac{s^{2^*}}{2^*}\int_{\R^N}|v_\varepsilon|^{2^*}dx=\frac{s^2}{2}\int_{\R^N}|\nabla v_\varepsilon|^2dx-\frac{s^{2^*}}{2^*}.
\end{equation*}
It is very standard to get that $\Phi(s)$ achieves its maximum at
\begin{equation*}
s_\varepsilon:=(\int_{\R^N}|\nabla v_\varepsilon|^2dx)^{\frac{1}{2^*-2}}=(\int_{\R^N}|\nabla v_\varepsilon|^2dx)^{\frac{N-2}{4}}.
\end{equation*}
Then we have
\begin{equation}\label{eq3.4}
\Phi(s_\varepsilon)=\max_{s>0}\Phi(s)=\frac{1}{N}(\int_{\R^N}|\nabla v_\varepsilon|^2dx)^{\frac{N}{2}}\leq\frac{1}{N}S^{\frac{N}{2}}+O(\varepsilon^{\frac{N-2}{2}}),
\end{equation}
in which we use \eqref{eq3.1} and the following inequality
\begin{equation*}
(b+d)^\beta\leq b^\beta+\beta(b+d)^{\beta-1}d\quad\mbox{for}\ b, d\geq0, \beta\geq1.
\end{equation*}
Now we consider
\begin{equation*}
\eta(\varepsilon)=\left\{\begin{matrix}O(\varepsilon^{\frac{1}{2}})&N=3, \\O(\varepsilon|\ln\varepsilon|)&N=4,\\O(\varepsilon)&N\geq5.\end{matrix}\right.
\end{equation*}
It view of $(V)$, \eqref{eq3.2} and \eqref{eq3.4}, we find a positive constant $C_4$ such that,
\begin{eqnarray}
\nonumber\max_{t>0}I(tv_\varepsilon)&=&\frac{t_\varepsilon^2}{2}\int_{\R^N}|\nabla v_\varepsilon|^2dx +\frac{t_\varepsilon^2}{2}\int_{\R^N}V(x)v_\varepsilon^2dx -\int_{\R^N}H(x,t_\varepsilon v_\varepsilon)dx
\\&&-\frac{t_\varepsilon^{2^*}}{2^{*}}\int_{\R^N}|v_\varepsilon|^{2^{*}}dx.
\nonumber\\\nonumber&\leq&\frac{1}{N}S^{\frac{N}{2}}+O(\varepsilon^{\frac{N-2}{2}})+ \frac{t_\varepsilon^2}{2}\|V\|_\infty\int_{\R^N}v_\varepsilon^2dx-\int_{\R^N}H(x,t_\varepsilon v_\varepsilon)dx
\\&\leq&\frac{1}{N}S^{\frac{N}{2}}+\eta(\varepsilon)\left[C_4 -\frac{1}{\eta(\varepsilon)}\int_{\R^N}H(x,t_\varepsilon v_\varepsilon)dx\right]\label{eq3.5}.
\end{eqnarray}
In order to prove Lemma \ref{le10}, we just need to verify that
\begin{equation}\label{eq3.6}
\lim_{\varepsilon\rightarrow0^+}\frac{1}{\eta(\varepsilon)}\int_{\R^N}H(x,t_\varepsilon v_\varepsilon)dx>C_4.
\end{equation}
Indeed by \eqref{eq3.3}, we have $\|u_\varepsilon\|_{2^*}\leq2K$ for $\varepsilon>0$ small enough, and then
\begin{equation}\label{eq3.7}
t_\varepsilon v_\varepsilon\geq\frac{T_1}{2K}u_\varepsilon=\frac{T_1}{2K}\omega_\varepsilon= \frac{T_1}{2K}\frac{(N(N-2)\varepsilon)^{\frac{N-2}{4}}}{(\varepsilon+|x|^2)^{\frac{N-2}{2}}}\geq Cv_\varepsilon^{-\frac{N-2}{4}}
\end{equation}
for $|x|<\varepsilon^{\frac{1}{2}}<\frac{\varrho}{2}$, where constant $C>0$. It follows from Lemma \ref{le2} $(4)$ that for any $A>0$, there exists $R(A)>0$ such that for all $(x,s)\in\Omega\times[R(A),+\infty)$,
\begin{equation*}
H(x,s)\geq\left\{\begin{matrix}As^4&N=3, \\As^2\ln s&N=4,\\As^2&N\geq5.\end{matrix}\right.
\end{equation*}
Thus by \eqref{eq3.7}, for $\varepsilon>0$ small enough, one has
\begin{equation*}
\int_{|x|<\varepsilon^{\frac{1}{2}}}H(x,t_\varepsilon v_\varepsilon)dx\geq\left\{\begin{matrix}CA\int_{|x|<\varepsilon^{\frac{1}{2}}}\varepsilon^{-1}dx= CA\varepsilon^{\frac{1}{2}}&N=3, \\CA\int_{|x|<\varepsilon^{\frac{1}{2}}}\varepsilon^{-1}\ln(C\varepsilon^{-\frac{1}{2}})dx= CA\varepsilon\ln(C\varepsilon^{-\frac{1}{2}})&N=4, \\CA\int_{|x|<\varepsilon^{\frac{1}{2}}}\varepsilon^{-\frac{N-2}{2}}dx= CA\varepsilon&N\geq5.\end{matrix}\right.
\end{equation*}
Combining with $H(x,s)\geq0$ and the arbitrariness of $A$, the above relation establish \eqref{eq3.6}. Hence we complete the proof.
\end{proof}

\begin{remark}
\rm{From Lemma \ref{le4} $(1)$ and Lemma \ref{le10}, we have $0<c=\inf\limits_\N I<\frac{1}{N}S^{\frac{N}{2}}$.}
\end{remark}

\section{Proof of Theorem \ref{th1}}

In this section, we will give the proof of Theorem \ref{th1}. Due to the possible lack of compactness for the criticality of the growth and the unboundedness of the definition domain, we will turn to the concentration-compactness lemmas to establish the existence of the positive ground state solutions.

\begin{proof}
By Ekeland's variational principle in \cite{mmt}, there exist a Palais-Smale sequence $\{w_n\}\subset S_1$ for $\Psi$ such that $\Psi(w_n)\rightarrow c$. Set $v_n=m(w_n)$, then from Lemma \ref{le7} $(2)$, $\{v_n\}\subset\N $ is a Palais-Smale sequence for $I$ and $I(v_n)\rightarrow c$. According to Lemma \ref{le8}, $\{v_n\}$ is bounded. Thus by the concentration-compactness principle in \cite{Lion1,Lion2}, we get $\{v_n\}$ is either

$(i)$ (Vanishing):
\begin{equation*}
\lim_{n\rightarrow\infty}\sup_{y\in\R^N}\int_{B_r(y)}|v_n|^2dx=0\quad\mbox{for all}\ r>0,
\end{equation*}

or $(ii)$ (Nonvanishing): There exist $r,\xi>0$ and a sequence $\{y_n\}\subset\R^N$ such that
\begin{equation*}
\lim_{n\rightarrow\infty}\int_{B_r(y_n)}|v_n|^2dx\geq\xi.
\end{equation*}
Now we divide our proof into the following two steps:

\textbf{Step 1:} We claim that $\{v_n\}$ cannot be vanishing if $c\in(0,\frac{1}{N}S^{\frac{N}{2}})$.

Suppose by contradiction that $\{v_n\}$ is vanishing, then it follows from P. L. Lions' Lemma 1.21 in \cite{mmt} that $v_n\rightarrow0$ in $L^\alpha(\R^N)$ whenever $2<\alpha<2^*$. From Lemma \ref{le2} $(6)$, we have that
\begin{equation*}
\left|\int_{\R^N}H(x,v_n)dx\right|\leq\frac{\delta}{2}\int_{\R^N}|v_n|^2dx +\frac{\delta}{2^*}\int_{\R^N}|v_n|^{2^*}dx+\frac{C_\delta}{\alpha}\int_{\R^N}|v_n|^\alpha dx,
\end{equation*}
for any $\delta>0$, $C_\delta>0$ and $\alpha\in(2,2^*)$, which gives that
\begin{equation*}
\int_{\R^N}H(x,v_n)dx=o(1)\quad\mbox{as}\ n\rightarrow\infty.
\end{equation*}
Similarly, we obtain
\begin{equation*}
\int_{\R^N}h(x,v_n)v_ndx=o(1)\quad\mbox{as}\ n\rightarrow\infty.
\end{equation*}
Since $\{v_n\}$ is a Palais-Smale sequence for the functional $I$, it follows that
\begin{eqnarray*}
c+o(1)&=&I(v_n)-\frac{1}{2}\langle I'(v_n), v_n\rangle
\\&=&-\int_{\R^N}H(x,v_n)dx-\frac{1}{2^{*}}\int_{\R^N}|v_n|^{2^{*}}dx+\frac{1}{2}\int_{\R^N}h(x,v_n)v_ndx
\\&&+\frac{1}{2}\int_{\R^N}|v_n|^{2^*}dx
\\&=&(\frac{1}{2}-\frac{1}{2^{*}})\int_{\R^N}|v_n|^{2^*}dx.
\end{eqnarray*}
Hence
\begin{equation}\label{eq4.1}
\lim_{n\rightarrow\infty}\int_{\R^N}|v_n|^{2^*}dx=Nc>0.
\end{equation}
Consider the fact that $\langle I'(v_n), v_n\rangle\rightarrow0$, hence we have
\begin{eqnarray*}
o(1)&=&\langle I'(v_n), v_n\rangle
\\&=&\int_{\R^N}\left(|\nabla v_n|^2+V(x)v_n^2\right)dx-\int_{\R^N}h(x,v_n)v_ndx-\int_{\R^N}|v_n|^{2^{*}}dx
\\&=&\|v_n\|^2-\int_{\R^N}|v_n|^{2^{*}}dx,
\end{eqnarray*}
therefore
\begin{equation}\label{eq4.2}
\lim_{n\rightarrow\infty}\|v_n\|^2=Nc.
\end{equation}
By the definition of the best constant $S$ in \eqref{eqs}, we obtain
\begin{equation*}
\int_{\R^N}|v_n|^{2^{*}}dx\leq\left(\frac{1}{S}\int_{\R^N}|\nabla v_n|^2dx\right)^{\frac{2^*}{2}}\leq\left(\frac{\|v_n\|^2}{S}\right)^{\frac{2^*}{2}}.
\end{equation*}
Passing to the limit in the above equality, then in view of \eqref{eq4.1} and \eqref{eq4.2} we can obtain
\begin{equation*}
Nc\leq\left(\frac{Nc}{S}\right)^{\frac{2^*}{2}},
\end{equation*}
that is
\begin{equation*}
c\geq\frac{1}{N}S^{\frac{N}{2}},
\end{equation*}
which is contradicts to the fact that $c<\frac{1}{N}S^{\frac{N}{2}}$. So that vanishing cannot occur.

\textbf{Step 2:}
If $\{v_n\}$ can be nonvanishing, here we take $y_n$ in $(ii)$ up to a subsequence, then there exist a larger $r>0$ and $\{y_n\}\subset\Z^N$ such that
\begin{equation*}
\int_{B_r(0)}|v_n(x+y_n)|^2dx=\int_{B_r(y_n)}|v_n|^2dx.
\end{equation*}
Define $\tilde{v}_n(x):= v_n(x+y_n)$. Then there exist a nonnegative function $v\in E$ such that up to a subsequence, $\tilde{v}_n\rightharpoonup v$ and $\lim\limits_{n\rightarrow\infty}\int_{B_r(0)}|v|^2dx\geq\xi$. Since $I$ is invariant and $\nabla I$ is equivariant with respect to the $\Z^N$-action, we have $I'(v)=0$. Obviously, $v$ is a nontrivial critical point of $I$, therefore $v\in\N$ and $I(v)\geq c$. Furthermore, by \eqref{eq9}, Lemma \ref{le2} $(5)$ and the Fatou's lemma, we have
\begin{eqnarray*}
c+o(1)&=&I(v_n)-\frac{1}{2}\langle I'(v_n), v_n\rangle
\\&=&\left(\frac{1}{2}\int_{\R^N}h(x,v_n)v_ndx-\int_{\R^N}H(x,v_n)dx\right)
+\frac{1}{N}\int_{\R^N}|v_n|^{2^{*}}dx
\\&\geq&\left(\frac{1}{2}\int_{\R^N}h(x,v)vdx-\int_{\R^N}H(x,v)dx\right)
+\frac{1}{N}\int_{\R^N}|v|^{2^{*}}dx
\\&=&I(v)-\frac{1}{2}\langle I'(v), v\rangle+\ o(1)
\\&=&I(v)+\ o(1),
\end{eqnarray*}
which implies $I(v)\leq c$. Hence $I(v)=c$ and thus $v$ is a positive ground state solution of problem \eqref{eq1}.

Summing the above two steps, we complete the proof of Theorem \ref{th1}.
\end{proof}

\begin{flushleft}
\textbf{Conflict of interest} The authors declared that they have no conflict of interest.
\end{flushleft}

%

\bibliographystyle{elsarticle-num}
\bibliography{<your-bib-database>}

\begin{thebibliography}{00}
\bibitem{AR} A. Ambrosetti and P.H. Rabinowitz, Dual variational methods in critical point theory and applications, J. Funct. Anal. 14 (1973) 349-381.
\bibitem{HSB} H.S. Brandi, C. Manus, G. Mainfray, T. Lehner and G. Bonnaud, Relativistic and ponderomotive self-focusing of a laser beam in a radially inhomogeneous plasma, Phys. Fluids B 5 (1993) 3539-3550.
\bibitem{bn} H. Br\'ezis and L. Nirenberg, Positive solutions of nonlinear elliptic equations involving critical Sobolev exponents, Comm. Pure Appl. Math. 36 (1983) 437-477.
\bibitem{CXJJGA} X. Chang, M. Liu and D. Yan, Normalized ground state solutions of nonlinear schr\"odinger equations involving exponential critical growth, J. Geom. Anal. 33 (2023), no. 3, 83.
\bibitem{XLC} X.L. Chen and R.N. Sudan, Necessary and sufficient conditions for self-focusing of short ultraintense laser pulse in underdense plasms, Phys. Rev. Lett. 70 (1993) 2082-2085.
\bibitem{CCPAA} S. Chen and X. Tang, Ground state solutions for generalized quasilinear Schr\"odinger equations with variable potentials and Berestycki-Lions nonlinearities, J. Math. Phys. 59 (2018) no. 8, 081508, 18 pp.
\bibitem{ADB} A. De Bouard, N. Hayashi and J. Saut, Global existence of small solutions to a relativistic nonlinear Schr\"odinger equation, Comm. Math. Phys. 189 (1997) 73-105.
\bibitem{Peng} Y. Deng, S. Peng and S. Yan, Positive soliton solutions for generalized quasilinear Schr\"odinger equations with critical growth, J. Differential Equations 258 (2015) 115-147.
\bibitem{FS} X. Fang and A. Szulkin, Multiple solutions for a quasiliniear Schr\"odinger equation, J. Differential Equations 254 (2013) 2015-2032.
\bibitem{Furtado} M.F. Furtado, E.D. Silva and M.L. Silva, Existence of solution for a generalized quasilinear elliptic problem, J. Math. Phys. 58 (2017) no. 3, 031503, 14 pp.
\bibitem{RWH} R.W. Hasse, A general method for the solution of nonlinear soliton and kink Schr\"odinger equations, Z. Phys. B 37 (1980) 83-87.
\bibitem{TANGJGA} J. Kang, X. Liu and C. Tang, Ground state sign-changing solutions for critical Schr\"odinger-Poisson system with steep potential well, J. Geom. Anal. 33 (2023), no. 2, Paper No. 59, 24 pp.
\bibitem{PLK} P.L. Kelley, Self focusing of optical beams, Phys. Rev. Lett. 15 (1965) 1005-1008.
\bibitem{AMK} A.M. Kosevich, B.A. Ivanov and A.S. Kovalev, Magnetic Solitons, Phys. Rep. 194 (1990) 117-238.
\bibitem{SK} S. Kurihara, Large-amplitude quasi-solitons in superfluid films, J. Phys. Soc. Japan 50 (1981) 3262-3267.
\bibitem{EWL} E.W. Laedke, K.H. Spatschek and L. Stenflo, Evolution theorem for a class of perturbed envelope soliton solutions, J. Math. Phys. 24 (1983) 2764-2769.
\bibitem{Lion1} P.L. Lions, The concentration-compactness principle in the calculus of variations, the locally compact case, part I, Ann. Inst. H. Poincar\'e Anal. Non Lin\'eaire 1 (1984) 109-145.
\bibitem{Lion2} P.L. Lions, The concentration-compactness principle in the calculus of variations, the locally compact case, part II, Ann. Inst. H. Poincar\'e Anal. Non Lin\'eaire 1 (1984) 223-283.
\bibitem{Tang} J. Liu, J. Liao and C. Tang, A positive ground state solution for a class of asymptotically periodic Schr\"odinger equations with critical exponent, Comput. Math. Appl. 72 (2016) 1851-1864.
\bibitem{LLW} X. Liu, J. Liu and Z. Wang, Quasilinear elliptic equations with critical growth via perturbation method, J. Differential Equations 254 (2013) 102-124.
\bibitem{S1} J. Liu and Z. Wang, Soliton solutions for quasilinear Schr\"odinger equations I, Proc. Amer. Math. Soc. 131 (2003) 441-448.
\bibitem{S2} J. Liu, Y. Wang and Z. Wang, Soliton solutions for quasilinear Schr\"odinger equations II, J. Differential Equations 187 (2003) 473-493.
\bibitem{VGM} V.G. Makhankov and V.K. Fedyanin, Nonlinear effects in quasi-one-dimensional models of condensed matter theory, Phys. Rep. 104 (1984) 1-86.
\bibitem{AM} A. Moameni, Existence of soliton solutions for a quasilinear Schr\"odinger equation involving critical exponent in $\R^N$, J. Differential Equations 229 (2006) 570-587.
\bibitem{BR} B. Ritchie, Relativistic self-focusing and channel formation in laser-plasma interactions, Phys. Rev. E 50 (1994) 687-689.
\bibitem{Shen} Y. Shen and Y. Wang, Soliton solutions for generalized quasilinear Schr\"odinger equations, Nonlinear Anal. 80 (2013) 194-201.
\bibitem{AS} A. Szulkin and T. Weth, Ground state solutions for some indefinite variational problems, J. Funct. Anal. 257 (2009) 3802-3822.
\bibitem{monm} A. Szulkin and T. Weth, The method of Nehari manifold, Handbook of Nonconvex Analysis and Applications, 597¨C632, Int. Press, Somerville, MA, 2010.
\bibitem{smp} J.L. V\'{a}zquez, A strong maximum principle for some quasilinear elliptic equations, Appl. Math. Optim. 12 (1984) 191-202.
\bibitem{mmt} M. Willem, Minimax theorems, Progress in Nonlinear Differential Equations and their
Applications, 24. Birkh\"auser Boston, Inc., Boston, MA, 1996.








\end{thebibliography}

\section*{References}

\end{document}